\newtheorem{thm}{Theorem}[section]
\newtheorem{lem}[thm]{Lemma}
\newtheorem{prop}[thm]{Proposition}
\theoremstyle{definition}
\newtheorem{definition}[thm]{Definition}
\theoremstyle{remark}
\newtheorem{rem}[thm]{Remark}
\numberwithin{equation}{section}
\begin{document}

\newcommand{\thmref}[1]{Theorem~\ref{#1}}
\newcommand{\secref}[1]{Section~\ref{#1}}
\newcommand{\lemref}[1]{Lemma~\ref{#1}}
\newcommand{\propref}[1]{Proposition~\ref{#1}}
\newcommand{\corref}[1]{Corollary~\ref{#1}}
\newcommand{\remref}[1]{Remark~\ref{#1}}
\newcommand{\eqnref}[1]{(\ref{#1})}
\newcommand{\exref}[1]{Example~\ref{#1}}

\newcommand{\nc}{\newcommand}
\nc{\Z}{{\mathbb Z}}
\nc{\C}{{\mathbb C}}
\nc{\N}{{\mathbb N}}
\nc{\R}{{\mathbb R}}
\nc{\F}{{\mf F}}
\nc{\Q}{\ol{Q}}
\nc{\la}{\lambda}
\nc{\ep}{\epsilon}
\nc{\h}{\mathfrak h}
\nc{\n}{\mf n}
\nc{\G}{{\mathfrak g}}
\nc{\DG}{\widetilde{\mathfrak g}}
\nc{\g}{{\mathfrak g}}
\nc{\SG}{\overline{\mathfrak g}}
\nc{\D}{\mc D} \nc{\Li}{{\mc L}} \nc{\La}{\Lambda} \nc{\is}{{\mathbf
i}} \nc{\V}{\mf V} \nc{\bi}{\bibitem} \nc{\NS}{\mf N}
\nc{\dt}{\mathord{\hbox{${\frac{d}{d t}}$}}} \nc{\E}{\mc E}
\nc{\ba}{\tilde{\pa}} \nc{\half}{\frac{1}{2}} \nc{\mc}{\mathcal}
\nc{\mf}{\mathfrak} \nc{\hf}{\frac{1}{2}}
\nc{\hgl}{\widehat{\mathfrak{gl}}} \nc{\gl}{{\mathfrak{gl}}}
\nc{\hz}{\hf+\Z}
\nc{\dinfty}{{\infty\vert\infty}} \nc{\SLa}{\overline{\Lambda}}
\nc{\SF}{\overline{\mathfrak F}} \nc{\SP}{\overline{\mathcal P}}
\nc{\U}{\mathfrak u} \nc{\SU}{\overline{\mathfrak u}}
\nc{\ov}{\overline}
\nc{\wt}{\widetilde}
\nc{\osp}{\mf{osp}}
\nc{\spo}{\mf{spo}}
\nc{\hosp}{\widehat{\mf{osp}}}
\nc{\hspo}{\widehat{\mf{spo}}}
\nc{\I}{\mathbb{I}}
\nc{\X}{\mathbb{X}}
\nc{\Y}{\mathbb{Y}}
\nc{\hh}{\widehat{\mf{h}}}
\nc{\cc}{{\mathfrak c}}
\nc{\dd}{{\mathfrak d}}
\nc{\aaa}{{\mf A}}
\nc{\xx}{{\mf x}}
\nc{\wty}{\widetilde{\mathbb Y}}
\nc{\ovy}{\overline{\mathbb Y}}

\advance\headheight by 2pt

\title{Super duality and homology of unitarizable modules of Lie algebras}

\author[Po-Yi Huang]{Po-Yi Huang}
\address{Department of Mathematics, National Cheng Kung University, Tainan,
Taiwan 70101} \email{pyhuang@mail.ncku.edu.tw}
\author[Ngau Lam]{Ngau Lam}
\address{Department of Mathematics, National Cheng Kung University, Tainan,
Taiwan 70101} \email{nlam@mail.ncku.edu.tw}
\author[Tze-Ming To]{Tze-Ming To}
\address{Department of Mathematics, National Changhua University of Education, Changhua, Taiwan, 500}
\email{matotm@cc.ncue.edu.tw}

\newpage
\begin{abstract}
The $\mf{u}$-homology formulas for unitarizable modules at negative levels over classical Lie algebras of infinite rank of types $\mf{gl}(n)$, $\mf{sp}(2n)$ and $\mf{so}(2n)$ are obtained. As a consequence, we recover the Enright's formulas for three Hermitian symmetric pairs of classical types $(SU(p,q),SU(p)\times SU(q))$, $(Sp(2n),U(n))$ and $(SO^\ast(2n),U(n))$.
\end{abstract}



\maketitle

\setcounter{tocdepth}{1}

\section{Introduction}
In analogy to Kostant's $\mf{u}$-cohomology formulas \cite{Ko}, Enright establishes
similar formulas \cite{E} for unitarizable highest weight modules of Hermitian symmetric
pairs in term of certain complicated subsets of the Weyl groups. The argument there is
intricate and involves several equivalences of categories and non-trivial combinatorics
of the Weyl groups. Kostant's formula can be rephrased by saying the Kazhdan-Lusztig
polynomials associated to finite-dimensional module are monomials. The same statement is
true by Enright's formulas for unitarizable highest weight modules. Except for the
resemblance of the formulas, there was no obvious connection between Enright's formula
and Kostant's formula.

However, the modules appearing in the Howe duality at negative levels \cite{W,H1,H2} over
classical Lie algebras of infinite rank are unitarizable modules (cf. \cite{EHW}, see
also \propref{unitary} and \remref{rem:unitary} below) and the character formulas for
these modules can be obtained by applying the involution of the ring of symmetric
functions with infinite variables, which sends the elementary symmetric functions to the
complete symmetric functions, to the characters for the corresponding integrable modules
over the respective Lie algebras (cf. \cite{CK, CKW}). Remarkably, the $\mf{u}$-homology
groups of these modules are also dictated by those of the corresponding integrable
modules \cite{CK, CKW}. Recently, the correspondence between  $\mf{u}$-homology groups of
integrable modules at positive levels and $\mf{u}$-homology groups of unitarizable
modules (at negative levels) over the respective Lie algebras can be elucidated in terms
of the so called super duality \cite{CWZ,CW}, established in \cite{BrS,CL,CLW}. So far there is no explanation of the similarity of these two different
$\mf{u}$-homology groups. Super duality gives a first conceptual explanation of this
similarity \cite[Theorem 4.13]{CLW}.

To the best of our knowledge, there is no other proof of Enright's formulas. In this
paper, we give a proof of Enright's homology formulas for
unitarizable modules by using Kostant's formulas and super duality. The
$\mathfrak{u}$-homology formulas (see \thmref{main} below) for unitarizable  modules over classical Lie algebras of
infinite rank of types $\mf{gl}(n)$, $\mf{sp}(2n)$ and $\mf{so}(2n)$ are obtained by
combinatorial method. The proof involves relating the combinatorial data of Kostant's
formulas for integrable modules over corresponding Lie algebras, that are determined by
the super duality, to the data of the Lie algebras under consideration. By applying the
truncation functors (cf. \cite[Section 3.4]{CLW} to the $\mathfrak{u}$-homology formulas,
see also \secref{hw} below), we recover the Enright's formula for three
Hermitian symmetric pairs of classical types $(SU(p,q),SU(p)\times SU(q))$,
$(Sp(2n),U(n))$ and $(SO^\ast(2n),U(n))$. However, for $\mf{so}(2n)$, our method can only
recover partially Enright's formula for some unitarizable highest weight cases.

The paper is organized as follows. In \secref{P}, we review and set up notations for the
classical Lie algebras of finite and infinite rank. We describe the unitarizable highest
weight modules considered in this paper. Combinatorial description of Weyl groups are
also given in this section. In \secref{hw-data}, we compare the actions of certain
subsets of Weyl groups on certain numerical data associated with the highest weights. In
\secref{sec:main}, homology formulas for unitarizable modules over Lie algebras of
infinite rank are proved. In \secref{application}, Enright's homology formulas are
proved.

We shall use the following notations throughout this article. The symbols $\Z$,
$\N$, and $\Z_+$ stand for the sets of all, positive and
non-negative integers, respectively. We set $\Z^*:=\Z\backslash\{0\}$. For a
partition $\la$, we denote by
$\la'$ the transpose partition of $\la$.  Finally
all vector spaces, algebras, tensor products, et cetera, are over the field of
complex numbers $\C$.

\bigskip
\noindent{\bf Acknowledgments.} The second author is very grateful to Shun-Jen Cheng for numerous discussions and useful suggestions. He also thanks Weiqiang Wang for valuable suggestions. The first and second authors were partially supported by an NSC-grant and thank NCTS/South. The third author thanks NCTS/South for hospitality and support.

\section{Preliminaries}\label{P}
\subsection{Classical Lie algebras of infinite rank}\label{sec:LA}
In this subsection we review and fix notations on Lie algebras of interest in this paper. For details we refer to the references \cite{K, W, CK, CLW}.

\subsubsection{The Lie algebra $\mf{a}_\infty$}
Let $\mathbb{C}^{\infty}$ be the vector space over $\C$ with an ordered basis
$\{\,e_i\,|\,i\in\mathbb{Z}\,\}$ so that an element in ${\rm
End}(\C^\infty)$ may be identified with a matrix $(a_{ij})$
($i,j\in\Z$). Let $E_{ij}$ be the matrix with $1$ at the $i$-th row
and $j$-th column and zero elsewhere. Let $\mathring{{\mf a}}_\infty$ denote the
subalgebra of the Lie algebra
${\rm End}(\C^\infty)$ spanned by $E_{ij}$
with $i,j\in\Z$. Denote by ${\mf a}_\infty:=\mathring{{\mf a}}_\infty\oplus\C K$
the central extension of $\mathring{{\mf a}}_\infty$ by the one-dimensional center
$\C K$ given by the $2$-cocycle
\begin{equation}\label{cocycle}
\tau(A,B):={\rm Tr}([J,A]B),
\end{equation}
where $J=\sum_{i\le 0}E_{ii}$ and ${\rm Tr}(C)$ is the trace of the matrix $C$. Observe that the cocycle $\tau$ is a coboundary.
Indeed, there is embedding $\iota_{\mathring{{\mf a}}}$ from $\mathring{{\mf a}}_\infty$  to ${{\mf a}}_\infty$ defined by $A\in \mathring{{\mf a}}_\infty$ sending to $A+{\rm Tr}(JA)K$ (cf. \cite[Section 2.5]{CLW}). It is clear that  $\iota_{\mathring{{\mf a}}}(\mathring{{\mf a}}_\infty)$ is an ideal of ${{\mf a}}_\infty$ and  ${{\mf a}}_\infty$ is a direct sum of the ideals $\iota_{\mathring{{\mf a}}}(\mathring{{\mf a}}_\infty)$ and $\C K$. Note that $\iota_{\mathring{{\mf a}}}({E}_{ii})={E}_{ii}+K$ (resp. ${E}_{ii}$) for $i\le 0$ (resp. $i\ge 1$).

 The Cartan subalgebra $\sum_{i\in \Z}\C E_{ii}\oplus\C K$ is denoted by $\mf{h}_\mf{a}$. By assigning degree $0$ to the Cartan subalgebra and setting ${\rm
deg}E_{ij}=j-i$, $\mf{a}_\infty$ is equipped with a $\Z$-gradation
$\mf{a}_\infty=\bigoplus_{k\in\Z}(\mf{a}_\infty)_k$. This leads to the
following triangular decomposition:
\begin{equation*}
\mf{a}_\infty=(\mf{a}_\infty)_+\oplus (\mf{a}_\infty)_0\oplus
(\mf{a}_\infty)_-,
\end{equation*}
where $(\mf{a}_\infty)_{\pm}=\bigoplus_{k\in\pm\N}(\mf{a}_\infty)_k$ and
$(\mf{a}_\infty)_0=\mf{h}_\mf{a}$.

 The set of simple
coroots, simple roots and positive roots of $\mf{a}_\infty$ are respectively
\begin{align*}
\Pi_{\mf a}^{\vee}=\{\,& \beta_i^{\vee}:=E_{ii}-E_{i+1,i+1}+\delta_{i0}K\mid i\in \Z \, \}, \\
\Pi_{\mf a}=\{\,& \beta_i:=\epsilon_i-\epsilon_{i+1}\mid i\in \Z\, \}, \\
\Delta_{\mf a}^+ =\{\,& \epsilon_i-\epsilon_j\mid i<j,\, i,j\in\Z\,\},
\end{align*}
where $\epsilon_i\in \h_{\mf a}^*$ is determined by $\langle
\epsilon_i,E_{jj}\rangle=\delta_{ij}$ and
$\langle\epsilon_i,K\rangle=0$.  We also let $\vartheta_{\mf{a}}\in \h_{\mf a}^*$ be defined by
$\langle\vartheta_{\mf{a}},K\rangle=1$ and $\langle\vartheta_{\mf{a}},E_{jj}\rangle=0$, for all $j\in \Z$. Let $\rho_{\mf a}\in \h_{\mf a}^*$ be
determined by $\langle \rho_{\mf a},E_{jj}\rangle=-j$, for all $j\in\Z$,
and $\langle \rho_{\mf a},K\rangle=0$, so that we have $\langle
\rho_{\mf a},{\alpha}^{\vee}_i\rangle=1$, for all $i\in \Z$.

\subsubsection{The Lie algebras $\mathfrak{c}_{\infty}$ and $\mathfrak{d}_{\infty}$}
For
${\g}=\mathfrak{c,d}$, let $\mathring{\g}_{\infty}$ be the
subalgebra of $\mathring{\mf{a}}_\infty$ preserving the following bilinear form
on $\mathbb{C}^{\infty}$:
\begin{equation*}
 (e_i|e_j)=
\begin{cases}
(-1)^i\delta_{i,1-j}, & \text{if ${\g}=\mathfrak{c}$}, \\
 \delta_{i,1-j}, & \text{if ${\g}=\mathfrak{d}$},
\end{cases}\quad i,j\in\Z.
\end{equation*}
Let ${\g}_{\infty}=\mathring{\g}_{\infty}\oplus \mathbb{C}K$ be the
central extension of $\mathring{\g}_{\infty}$ determined by the
restriction of the two-cocycle \eqnref{cocycle}. Then ${\g}_{\infty}$
has a natural $\Z$-gradation and a triangular decomposition induced from
$\mathfrak{a}_{\infty}$ with $({\g}_{\infty})_n={\g}_{\infty}\cap
(\mathfrak{a}_{\infty})_n$, for $n\in\Z$. Similar to the $\mf{a}_\infty$ case, the cocycle is a coboundary.
Indeed, there are embeddings $\iota_{\mathring{\g}}$ from $\mathring{\g}_\infty$  to ${\g}_\infty$ defined by $A\in \mathring{\g}_\infty$ sending to $A+{\rm Tr}(JA)K$ \cite[Section 2.5]{CLW}. It is clear that  $\iota_{\mathring{\g}}(\mathring{\g}_\infty)$ is an ideal of ${\g}_\infty$ and  ${\g}_\infty$ is a direct sum of the ideals $\iota_{\mathring{\g}}(\mathring{\g}_\infty)$ and $\C K$. Note that $\iota_{\mathring{\g}}(\wt{E}_{i})=\wt{E}_{i}-K$ for $i\in \N$ where
$$\wt{E}_i=E_{ii}-E_{1-i,1-i}.$$
Note that $({\g}_{\infty})_0=\sum_{i\in\N}\C\widetilde{E}_i\oplus \C
K$ are Cartan subalgebras, which will be denoted by $\mathfrak{h}_{\g}$. We let $\epsilon_i\in
\h_{\g}^*$ be defined by
$\langle\epsilon_i,\widetilde{E}_j\rangle=\delta_{ij}$ for $i,j\in
\N$ and $\langle\epsilon_i,K\rangle=0$. Then the set of
positive roots of $\mathfrak{c}_\infty$ and $\mathfrak{d}_\infty$ are respectively
\begin{align*}
\Delta_{\mf c}^+ =\{\,& \pm\epsilon_i-\epsilon_j\ ,\ -2\epsilon_i\
(i,j\in\N, i<j) \,\}, \\
\Delta_{\mf d}^+ =\{\,& \pm\epsilon_i-\epsilon_j\   (i,j\in\N, i<j) \,\}.
\end{align*}
Set
\begin{equation*}
\begin{aligned}
&\alpha_0^{\vee}=
\begin{cases}
 -\widetilde{E}_1+ K, & \text{for $\mf{c}_\infty$},\\
-\widetilde{E}_1-\widetilde{E}_2+2K, & \text{for $\mf{d}_\infty$},
\end{cases}  \quad \
\alpha_0 =
\begin{cases}
-2\epsilon_1, & \text{for $\mf{c}_\infty$},\\
-\epsilon_1-\epsilon_2, & \text{for $\mf{d}_\infty$}.
\end{cases}
\end{aligned}
\end{equation*}
The set of simple
coroots and simple roots of ${\g}_\infty$ are respectively
\begin{align*}
\Pi_{\g}^{\vee}=\{\,& \alpha_0^{\vee}, \,
  \alpha_i^{\vee}=\widetilde{E}_i-\widetilde{E}_{i+1} \ (i\in\N)\, \}, \\
\Pi_{\g}=\{\,& \alpha_0, \,
 \alpha_i=\epsilon_i-\epsilon_{i+1} \ (i\in\N)\, \}.
\end{align*}
Let $\vartheta_{\g}\in \h_{\g}^*$ defined by
$\langle\vartheta_{\g},\widetilde{E}_i\rangle=0$ for $i\in\N$ and
$\langle\vartheta_{\g},K\rangle=r$ with $r=1$ (resp. $\hf$) for
${\g}=\mathfrak{c}$ (resp. $\mf{d}$).
We also let $\rho_{\g}\in\h_{\g}^*$ be determined by
\begin{equation*}
\begin{aligned}
&\langle \rho_{\g},\widetilde{E}_{j}\rangle=
\begin{cases}
  -j, & \text{for ${\g}=\mf{c}$},\\
 -j+1, & \text{for ${\g}=\mf{d}$},
\end{cases} j\in\N, \qquad\text{and}\qquad \
\langle \rho_{\g},K\rangle =0.
\end{aligned}
\end{equation*}
We have $\langle \rho_{\g},\alpha_i^{\vee}\rangle=1$ for $i\in \N$ and ${\g}=\mathfrak{c, d}$.

\subsubsection{Levi subalgebras}
For ${\g}=\mathfrak{a, c, d}$, let
$\Delta_{\g}:=\Delta_{\g}^+\cup\Delta_{\g}^-$, where
$\Delta_{\g}^-=-\Delta_{\g}^+$. Then $\Delta_{\g}$ is the set of roots of ${\g}_\infty$. Let
$\Delta_{{\g},c}^\pm:=\Delta_{\g}^\pm\cap\big{(}\sum_{j\not=0}\Z
\alpha_j\big{)}$ and
$\Delta_{{\g},n}^\pm:=\Delta_{\g}^\pm\setminus\Delta_{{\g},c}^\pm$. Denote by
${\g}_\alpha$ the root space corresponding to $\alpha\in\Delta_{\g}$. Set
\begin{equation}\label{parabolic}
\begin{aligned}
&{\mf u}_{\g}^{\pm} := \sum_{\alpha\in \Delta_{{\g},n}^\pm}{\g}_{\alpha}, \quad
{\mf l}_{\g}  := \sum_{\alpha\in \Delta_{{\g},c}^\pm}{\g}_{\alpha}\oplus \h_{\g}.
\end{aligned}
\end{equation}
Then we have ${\g}_\infty=\mathfrak u_{\g}^+\oplus\mathfrak{l}_{\g}\oplus\mathfrak{u}_{\g}^-$.
The Lie algebras $\mathfrak{l}_{\g}$ and ${\g}_\infty$ share the same Cartan
subalgebra $\h_{\g}$. Moreover, $\mathfrak{l}_{\g}$ has a triangular decomposition induced from ${\g}_\infty$. For $\mu\in\h_{\g}^*$, we denote respectively by $L({\g}_\infty, \mu)$ and $L(\mathfrak{l}_{\g},\mu)$
the irreducible highest weight ${\g}_\infty$-module and $\mathfrak{l}_{\g}$-module with
highest weight $\mu$ with respect to the triangular decompositions.

For a root $\alpha\in\Delta_{\g}$, ${\g}=\mf{a,c,d}$, define the reflection $\sigma_\alpha$ by
\begin{equation*}
\sigma_\alpha(\mu):=\mu-\langle\mu,{\alpha}^{\vee}\rangle\alpha,  \quad \mu\in\h_{\g}^*.
\end{equation*}
Here and after, ${\alpha}^{\vee}$ denote the coroot of the root $\alpha$. Let $I_\mf{a}=\Z$ and $I_{\g}=\N$ for ${\g}=\mf{c,d}$. For $j\in I_{\g}\cup\{0\}$, let $\sigma_j=\sigma_{\alpha_j}$. Let ${W}_{\g}$ be the subgroup of ${\rm Aut}(\h_{\g}^*)$
generated by the reflections $\sigma_j$ with $j\in I_{\g}\cup\{0\}$, i.e. ${W}_{\g}$ is the Weyl group of
${\g}_\infty$. For each $w\in {W}_{\g}$, $\ell_{\g}(w)$ denote the length of $w$. We also define
\begin{equation*}
w\circ\mu:=w(\mu+\rho_{\g})-\rho_{\g}, \quad\mu\in\h_{\g}^*,w\in {W}_{\g}.
\end{equation*}
Consider ${W}_{{\g},0}$ the subgroup of ${W}_{\g}$ generated by $\sigma_j$ with
$j\not=0$. Let ${W}_{\g}^0$ denote the set of the
minimal length left coset representatives of
$ {W}_{\g}/{W}_{{\g},0}$ (cf.~\cite{V, L, Ku}). We have ${W}_{\g}={W}_{\g}^0{W}_{{\g},0}$. For $k\in\Z_+$, set
$${W}^0_{{\g},k}:=\{\,w\in {W}_{\g}^0\,\vert\, \ell_{\g} (w)=k\,\}.$$

Finally, for ${\g}=\mathfrak{a, c, d}$, let $(\cdot\vert\cdot)$ be a bilinear form defined on subspace of $\h_{\g}^*$
satisfying
\[(\epsilon_i\vert\epsilon_j)=\delta_{ij},\,\, \quad (\vartheta_{\g}\vert\epsilon_i)=(\epsilon_i\vert\vartheta_{\g})=(\vartheta_{\g}\vert\vartheta_{\g})=0\quad \text{for}\,\, \, i,j\in I_{\g}.\]
Recall that $I_\mf{a}=\Z$ and $I_{\g}=\N$ for ${\g}=\mf{c,d}$.

\subsection{Finite dimensional Lie algebras}\label{f.d.LA}
For the rest of the paper, let $\g$ stand for $\mathfrak{a,c,d}$.  We shall fix the following notations:
\[
\overline{\mathfrak{a}}:=\mathfrak{a},\,\,\,
\overline{\mathfrak{c}}:=\mathfrak{d},\,\,\,
\overline{\mathfrak{d}}:=\mathfrak{c}.
\]

\begin{rem}
For $\xx=\mf{c,d}$, let $\g^\xx$ and $\ov{\g}^\xx$ be the Lie algebras defined in \cite[Section 2]{CLW} with $m=0$. Then ${\mf{c}}_\infty={\g}^{\mf{c}}$,  ${\mf{d}}_\infty={\g}^{\mf{d}}$, $\overline{\mf{c}}_\infty\cong\ov{\g}^{\mf{c}}$ and $\overline{\mf{d}}_\infty\cong\ov{\g}^{\mf{d}}$. Note that $K$ send to $-K$ for the isomorphisms $\overline{\mf{c}}_\infty\cong\ov{\g}^{\mf{c}}$ and $\overline{\mf{d}}_\infty\cong\ov{\g}^{\mf{d}}$.
\end{rem}

For $m,n\in\N$, the subalgebra of $\mathring{{\mf a}}_\infty$ spanned by $E_{ij}$ with $1-m\le i, j\le n$,
denoted by $\mf{t}_{m,n}\mf{a}$, is isomorphic to the general linear algebra $\mf{gl}(m+n)$.
The subalgebras $(\mf{t}_{n,n}\mf{a})\cap \mathring{{\mf c}}_\infty$ and $(\mf{t}_{n,n}\mf{a})\cap \mathring{{\mf d}}_\infty$  are isomorphic to the symplectic Lie algebra $\mf{sp}(2n)$ and orthogonal Lie algebra $\mf{so}(2n)$, denoted by $\mf{t}_n\mf{c}$ and $\mf{t}_n\mf{d}$ respectively.
We shall drop the subscript of $\mf{t}$ if there has no ambiguity.

For ${{\ov{\g}}}=\mf{a, c, d}$, the embeddings $\iota_{\mathring{\ov{\g}}}$ restricted to $\mf{t}{\ov{\g}}$ are also denoted by $\iota_{\mathring{\ov{\g}}}$. Let $\Delta_{\mf{t}{\ov{\g}}}^+$ denote the set of
positive roots of $\mf{t}{\ov{\g}}$ with respect to the triangular decomposition induced from ${{\ov{\g}}}_\infty$. We also let $\Delta_{\mf{t}{\ov{\g}}}=\Delta_{\mf{t}{\ov{\g}}}^+\cup-\Delta_{\mf{t}{\ov{\g}}}^+$ and $\Delta_{\mf{t}{\ov{\g}},n}^+=\Delta_{{{\ov{\g}}},n}^+\cap\Delta_{\mf{t}{\ov{\g}}}^+$. Set $\h_{\mf{t}{\ov{\g}}}=\h_{{\ov{\g}}}\cap \mf{t}{\ov{\g}}$, ${\mf u}_{\mf{t}{\ov{\g}}}^{\pm}={\mf u}_{{\ov{\g}}}^{\pm}\cap \mf{t}{\ov{\g}}$ and ${\mf l}_{\mf{t}{\ov{\g}}}={\mf l}_{\ov{\g}}\cap \mf{t}{\ov{\g}}$. Note that $\mf{t}{\ov{\g}}$ and ${\mf l}_{\mf{t}{\ov{\g}}}$ share the same a Cartan subalgebra $\h_{\mf{t}{\ov{\g}}}$. Moreover, $\mathfrak{l}_{\mf{t}{\ov{\g}}}$ has a triangular decomposition induced from ${\mf{t}{\ov{\g}}}$. For $\mu\in\h_{\mf{t}{\ov{\g}}}^*$, we denote respectively by $L(\mf{t}{\ov{\g}}, \mu)$ and $L(\mathfrak{l}_{\mf{t}{\ov{\g}}},\mu)$
the irreducible highest weight ${\mf{t}{\ov{\g}}}$-module and $\mf{l}_{\mf{t}{\ov{\g}}}$-module with
highest weight $\mu$ with respect to the triangular decompositions. For $\mu\in\h_{\mf{t}\ov{\g}}^*$, $L(\mf{l}_{\mf{t}{\ov{\g}}},\mu)$ is extended to an $(\mf{l}_{\mf{t}{\ov{\g}}}+{\mf u}_{\mf{t}{\ov{\g}}}^{+})$-module by letting ${\mf u}_{\mf{t}{\ov{\g}}}^{+}$ act trivially. Let $\mf{p}_{\mf{t}{\ov{\g}}}=\mf{l}_{\mf{t}{\ov{\g}}}+{\mf u}_{\mf{t}{\ov{\g}}}^{+}$. Define as usual the parabolic Verma module with highest weight $\mu$ by
 $$
N({\mf{t}\ov{\g}},\mu)={\rm Ind}_{\mf{p}_{\mf{t}{\ov{\g}}}}^{\mf{t}{\ov{\g}}}L(\mf{l}_{\mf{t}{\ov{\g}}},\mu).
 $$

The space $\h_{\mf{t}{\ov{\g}}}^*$ is spanned by $\epsilon_i$ with $1\le i\le n$ (resp. $1-m\le i\le n-1$) for ${{\ov{\g}}}=\mf{c, d}$ (resp. $\mf{a}$) and therefore $\h_{\mf{t}{\ov{\g}}}^*$ can be regarded as a subspace of $\h_{\ov{\g}}^*$. Note that $\h_{\mf{t}{\ov{\g}}}^*$ is an invariant subspace of $\sigma_i$ for $1\le i\le n$ (resp. $1-m\le i\le n-1$) for ${{\ov{\g}}}=\mf{c}$ or $\mf{d}$ (resp. $\mf{a}$). The restriction of these $\sigma_i$ to $\h_{\mf{t}{\ov{\g}}}^*$ are also denoted by $\sigma_i$. Let ${W}_{\mf{t}{\ov{\g}}}$ be the subgroup of ${\rm Aut}(\h_{\mf{t}{\ov{\g}}}^*)$
generated by these $\sigma_i$s. Then ${W}_{\mf{t}{\ov{\g}}}$ is the Weyl group of
$\mf{t}{\ov{\g}}$. For each $w\in {W}_{\mf{t}{\ov{\g}}}$ we let $\ell_{\mf{t}{\ov{\g}}}(w)$ denote the length of $w$.
Consider ${W}_{\mf{t}{\ov{\g}},0}$ the subgroup of ${W}_{\mf{t}{\ov{\g}}}$ generated by $\sigma_j$ with
$j\not=0$. Let ${W}_{\mf{t}{\ov{\g}}}^0$ denote the set of the
minimal length representatives of the left coset space
$ {W}_{\mf{t}{\ov{\g}}}/{W}_{\mf{t}{\ov{\g}},0}$ (cf.~\cite{L, Ku}). For $k\in\Z_+$, set
${W}^0_{{\mf{t}{\ov{\g}}},k}:=\{\,w\in {W}_{\mf{t}{\ov{\g}}}^0\,\vert\, \ell_{\mf{t}{\ov{\g}}}(w)=k\,\}$. We also define
\begin{equation*}
w\circ\mu:=w(\mu+\rho_{\mf{t}{\ov{\g}}})-\rho_{\mf{t}{\ov{\g}}}, \quad\mu\in\h_{\mf{t}{\ov{\g}}}^*,w\in {W}_{\mf{t}{\ov{\g}}}.
\end{equation*}

Finally, let $\rho_{\mf{t}{\ov{\g}}}$ denote the half sum of the positive roots. Then $\rho_{\mf{t}{\ov{\g}}}(h)=\rho_{\ov{\g}}(h)$ (resp. $\rho_{\mf{a}}(h)+\hf(n-m+1)$) for $h\in \h_{\mf{t}{\ov{\g}}}$ with ${{\ov{\g}}}=\mf{c,d}$ (resp. ${{\ov{\g}}}=\mf{a}$).

\subsection{Combinatorial descriptions of Weyl groups}
In this section, we present combinatorial descriptions of certain
aspects of infinite Weyl groups $W_\g$ (cf.~\cite{BB}).
Recall that $\Z^*:=\Z\backslash\{0\}$.

Define $\phi_\g\in\h_{\g}^*$ by
\begin{equation*}
\phi_{\g}=
\begin{cases}
-\sum_{i\le 0}\epsilon_i,& \text{if} \,\,{\g}=\mathfrak{a}; \\
\sum_{i\in \N}\epsilon_i, & \text{if} \,\,{\g}=\mathfrak{c,d}.
\end{cases}
\end{equation*}
Every element $\sigma\in \h_\g^*$ can be uniquely represented by $\sum_{i\in I_\g} \xi_i\epsilon_i+q\vartheta_\g$ with $\xi_i, q\in \C$.  For $\g=\mf{c}, \mf{d}$ and $i\in\N$, we define $\epsilon_{-i}=-\epsilon_i$. It is easy to see by computing the actions of $\sigma_i$ that the actions of $W_\g$ on $\h_\g^*$ is given by
\begin{align}
&\sigma(\sum_{i\in \Z} \xi_i\epsilon_i+q\vartheta_\mf{a})=\sum_{i\le 0} (\xi_i+q)\epsilon_{\tilde{\sigma}(i)}+\sum_{i>  0} \xi_i\epsilon_{\tilde{\sigma}(i)}+q\phi_{\mf{a}}+q\vartheta_\mf{a},  &\text{if}&\,\, \g=\mf{a}
;\label{action-a}\\
&\sigma(\sum_{i\in \N} \xi_i\epsilon_i+q\vartheta_\g)=\sum_{i\in \N} (\xi_i-q\langle\vartheta_\g,K\rangle)\epsilon_{\tilde{\sigma}(i)} +q\langle\vartheta_\g,K\rangle\phi_{\g}+q\vartheta_\g,  &\text{if}&\,\, \g=\mf{c}, \mf{d},\label{action-bcd}
\end{align}
where $\tilde{\sigma}$ is a permutation of $\Z$ (i.e. $\tilde{\sigma}$ is a bijection on $\Z$ satisfying $\tilde{\sigma}(j)=j$ for $|j|\gg 0$) for $\g=\mf{a}$ and $\tilde{\sigma}$ is a signed permutation of $\Z^*$ (i.e. $\tilde{\sigma}$ is a bijection on $\Z^*$ satisfying  $\tilde{\sigma}(j)=j$ for $|j|\gg 0$ and $\tilde{\sigma}(-i)=-\tilde{\sigma}(i)$ for $i\in \Z^*$) for $\g=\mf{c}, \mf{d}$. Therefore ${\sigma}\mapsto\tilde{\sigma}$
is a representation on $\Z$ and $\Z^*$ for $\g=\mf{a}$ and $\g=\mf{c}, \mf{d}$, respectively. Moreover, they are faithful representations. It is clear that the image of $W_{\mf{a}}$ in ${\rm Aut}(\Z)$ is the set of permutations of $\Z$ and the image of $W_{\mf{c}}$ (resp. $W_{\mf{d}}$) in ${\rm Aut}(\Z^*)$ is the set of a signed (resp. even signed) permutations of $\Z^*$. A signed permutation $\tilde{\sigma}$ of $\Z^*$ is called even signed permutation if $|\{i\in\N\mid \tilde{\sigma}(i)<0\}|$ is a even number. We shall identify $W_\g$ with the image of $W_\g$ in ${\rm Aut}(\Z)$ (resp. ${\rm Aut}(\Z^*)$) for $\g=\mf{a}$ (resp. $\mf{c,d}$) for the rest of the paper. Note that for $i\in \Z$, ${\tilde{\sigma}}_i(i)=i+1$, ${\tilde{\sigma}}_i(i+1)=i$ and ${\tilde{\sigma}}_i(j)=j$ for all $j\not= i, i+1$. Also for $\g=\mf{c}, \mf{d}$ and $i\in \N$,  ${\tilde{\sigma}}_i(i)=i+1$, ${\tilde{\sigma}}_i(i+1)=i$ and ${\tilde{\sigma}}_i(j)=j$ for all $j\not= i, i+1$ while ${\tilde{\sigma}}_0(1)=-1$ (resp. $-2$), ${\tilde{\sigma}}_0(2)=2$ (resp. $-1$), and ${\tilde{\sigma}}_0(j)=j$ for all $j\ge 3$ for $\g=\mf{c}$ (resp. $\mf{d}$). We shall use these representations for the rest of the paper and we shall simply write ${\sigma}(j)$ instead of $\tilde{\sigma}(j)$.

Recall that $\ell_\g$ denote the length function on $W_\g$ and ${W}_{\g}^0$ denote the set of the minimal length left coset representatives of $ {W}_{\g}/{W}_{\g,0}$. We have

\begin{align}\label{coset}
W_\g^0=
\begin{cases}\{\sigma\in W_\mf{a}\,|\,\sigma (i)<\sigma (j)\,\, {\rm for}\,\, i< j\le 0\,\, {\rm and}\,\,
\,\, 0< i<j \},&\text{if}\,\, \g=\mf{a};\\
\{\sigma\in W_\g|\sigma (i)<\sigma (j),\, {\rm for}\,\, 1\le i< j\}, &\text{if}\,\, \g=\mf{c,d}
\end{cases}
\end{align}
(see, e.g. \cite[Lemma 2.4.7, Proposition 8.1.4 and Proposition 8.2.4]{BB}) and for $\sigma\in W_\g^0$,
\begin{align}\label{length}
\ell_\g(\sigma)=
\begin{cases}|\{(i,j)\in\Z\times\Z\,\mid\,i<j, \sigma (i)>\sigma (j)\}|,&\text{if}\,\, \g=\mf{a};\\
|\{(i,j)\in\N\times\N\,\mid\, i\leq j,\sigma(-i)>\sigma (j)\}|, &\text{if}\,\, \g=\mf{c};\\
|\{(i,j)\in\N\times\N\,\mid\, i< j,\sigma(-i)>\sigma (j)\}|, &\text{if}\,\, \g=\mf{d}
\end{cases}
\end{align}
(see, e.g. \cite[Corollary 1.5.2, Corollary 8.1.1 and Corollary 8.2.1]{BB}).

\begin{lem}\label{sigma-bar}
For $\sigma\in W_\mf{c}^0$ with $\sigma(i)<0$ for $i\le j$, and $\sigma(i)>0$ for $i> j$, define $\ov{\sigma}\in W_\mf{d}^0$ by
\begin{equation*}
\ov{\sigma}(i)=
\begin{cases}
\sigma(i)-1,& \text{if  } \,\,i\le j; \\
1,& \text{if  } \,\,i=j+1\,\, \text{and $j$ is even}; \\
-1,& \text{if  } \,\,i=j+1\,\, \text{and $j$ is odd}; \\
\sigma(i-1)+1,& \text{if  } \,\,i\ge j+2.
\end{cases}
\end{equation*}
For each $k\ge 0$, the map from $W_{\mf{c},k}^0$ to $W_{\mf{d},k}^0$ sending $\sigma$ to $\ov{\sigma}$ is a bijection.
\end{lem}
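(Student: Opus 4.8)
The plan is to work entirely inside the combinatorial model of $W_\mf{c}^0$ and $W_\mf{d}^0$ as increasing signed sequences supplied by \eqnref{coset} and \eqnref{length}, and to verify three things in turn: that $\ov\sigma$ genuinely lands in $W_\mf{d}^0$, that the assignment preserves length, and that it is invertible. Throughout I would exploit that a $\sigma\in W_\mf{c}^0$ is a signed permutation with $\{|\sigma(i)|\mid i\in\N\}=\N$ satisfying $\sigma(1)<\sigma(2)<\cdots$ and $\sigma(-i)=-\sigma(i)$, so its first $j$ values are negative and the rest positive.

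First I would check well-definedness. The multiset of absolute values of $\ov\sigma$ on $\N$ is $\{|\sigma(i)|+1\mid i\in\N\}$ together with the single extra value $1$, which is again exactly $\N$; moreover $\ov\sigma(i)=i$ for $i\gg 0$, so $\ov\sigma$ is a genuine signed permutation. Monotonicity $\ov\sigma(1)<\ov\sigma(2)<\cdots$ holds because the block $\{\sigma(i)-1\}_{i\le j}$ stays $\le -2$, while the inserted value $\pm 1$ lies strictly between it and the block $\{\sigma(i-1)+1\}_{i\ge j+2}$, which is $\ge 2$. Counting negatives of $\ov\sigma$ gives $j$ (from the first block) plus one more precisely when $j$ is odd, hence an even number in either case; thus $\ov\sigma\in W_\mf{d}$, and therefore $\ov\sigma\in W_\mf{d}^0$.

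The heart of the argument is length preservation. Using $\sigma(-i)=-\sigma(i)$, rewrite $\ell_\mf{c}(\sigma)=|\{(i,j')\in\N^2\mid i\le j',\ \sigma(i)+\sigma(j')<0\}|$ and $\ell_\mf{d}(\ov\sigma)=|\{(i,j')\in\N^2\mid i<j',\ \ov\sigma(i)+\ov\sigma(j')<0\}|$. Splitting off the diagonal $i=j'$, which contributes exactly the $j$ negative positions, yields $\ell_\mf{c}(\sigma)=j+A$ where $A$ counts strict pairs with $\sigma(i)+\sigma(j')<0$. I would then compare $A$ with $\ell_\mf{d}(\ov\sigma)$ via the order-preserving correspondence of positions that is the identity on $\{1,\dots,j\}$ and shifts $\{j+1,j+2,\dots\}$ up by one, viewing the middle slot $j+1$ of $\ov\sigma$ as newly inserted. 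For pairs avoiding the middle slot the two shifts cancel exactly on crossing pairs (one index in block $1$, one in block $3$) and leave the always-determined sign unchanged on pairs lying within block $1$ or within block $3$, so these reproduce $A$. For pairs containing the middle slot, every pair of the form (block $1$, middle) has negative sum (contributing $j$) and every pair (middle, block $3$) has positive sum (contributing $0$). Hence $\ell_\mf{d}(\ov\sigma)=A+j=\ell_\mf{c}(\sigma)$, so the map sends $W_{\mf{c},k}^0$ into $W_{\mf{d},k}^0$.

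For bijectivity I would exhibit the inverse explicitly. Given $\tau\in W_\mf{d}^0$, there is a unique position $p$ with $|\tau(p)|=1$; set $j=p-1$ and define $\sigma(i)=\tau(i)+1$ for $i\le j$ and $\sigma(i)=\tau(i+1)-1$ for $i\ge j+1$, i.e. delete the slot $p$ and undo the shifts. The same bookkeeping as above shows $\sigma\in W_\mf{c}^0$ has exactly $j$ negative values and $\ov\sigma=\tau$. The one compatibility worth emphasizing is that the even-signed condition defining $W_\mf{d}$ forces the sign of $\tau(p)$ to match the parity of $j$: writing $s$ for the (even) number of negatives of $\tau$, a value $-1$ occurs as the last negative (so $p=s$, $j=s-1$ odd), whereas $+1$ occurs as the first positive (so $p=s+1$, $j=s$ even). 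This is exactly the parity rule built into the definition of $\ov\sigma$, so every $\tau$ is attained, and the two constructions are mutually inverse. The main obstacle is the length computation: one must track carefully how the $\pm1$ shifts alter the sign of each pair-sum and verify that the inserted middle element accounts for precisely the $j$ diagonal terms that separate the type $\mf{c}$ and type $\mf{d}$ length formulas.
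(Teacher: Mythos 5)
Your proof is correct and follows essentially the same route as the paper, which simply asserts via the combinatorial descriptions \eqnref{coset} and \eqnref{length} that the map is a bijection and that $\ell_\mf{c}(\sigma)=\ell_\mf{d}(\ov\sigma)$. You have merely supplied the details (well-definedness, the pair-counting for length, and the explicit inverse with the parity check) that the paper leaves to the reader.
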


\begin{proof} By \eqnref{coset}, it is a bijection from $W_{\mf{c}}^0$ to $W_{\mf{d}}^0$.
By \eqnref{length}, we have $\ell_\mf{c}(\sigma)=\ell_\mf{d}(\ov{\sigma})$ for $\sigma\in W_\mf{c}^0$. The lemma follows.
\end{proof}

Let $\{\xi_i\}_{i\in\N}$ be a sequence of real numbers. Define $\xi_{-i}:=-\xi_i$ for $i\in \N$.
For any sequence of strictly decreasing negative real numbers $\{\xi_i\}_{i\in\N}$ and $\sigma\in W_\g^0$ with $\g=\mf{c, d}$,
it is easy to see that $\{\xi_{\sigma(i)}\}_{i\in\N}$ is a sequence of strictly decreasing real numbers. The following lemma follows from the definition of $\ov{\sigma}$.

\begin{lem}\label{c-d}
Let $\{\xi_i\}_{i\in\N}$ be a sequence of strictly decreasing negative  real numbers. Define
$ \ov{\xi}_{i+1}=\xi_i$ for $i\in\N$ and $\ov{\xi}_1=0$. Then for all $\sigma\in W_\mf{c}^0$, we have
 \[
\{\xi_{\sigma(i)}\,\mid\,i\in\N\}\cup\{ 0\} =\{\ov{\xi}_{\ov{\sigma}(i)}\,\mid\,i\in\N\},
 \]
 where $\ov{\sigma}$ is defined in \lemref{sigma-bar}.
\end{lem}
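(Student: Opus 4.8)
The plan is to substitute the explicit formula for $\ov\sigma$ from \lemref{sigma-bar} directly into the right-hand side and to check, term by term, that the resulting collection of values reproduces $\{\xi_{\sigma(i)}\mid i\in\N\}\cup\{0\}$. First I would record the elementary identities that govern the reindexing. From $\ov{\xi}_{i+1}=\xi_i$ ($i\in\N$) and $\ov\xi_1=0$, together with the conventions $\xi_{-i}=-\xi_i$ and $\ov\xi_{-i}=-\ov\xi_i$, one gets $\ov\xi_{\pm1}=0$, and for every $k\in\N$ the two identities $\ov\xi_{k+1}=\xi_k$ and $\ov\xi_{-(k+1)}=-\ov\xi_{k+1}=-\xi_k=\xi_{-k}$. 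These are precisely the relations showing that passing from $\xi$ to $\ov\xi$ shifts the index by one in absolute value while inserting the value $0$ at the new index $\pm1$.

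Next I would fix the integer $j$ of \lemref{sigma-bar}, so that $\sigma(i)<0$ for $i\le j$ and $\sigma(i)>0$ for $i>j$, and split $\N$ into the three ranges $\{i\le j\}$, $\{i=j+1\}$, $\{i\ge j+2\}$ matching the case distinction in the definition of $\ov\sigma$. For $i\le j$ I write $\sigma(i)=-m$ with $m\in\N$; then $\ov\sigma(i)=\sigma(i)-1=-(m+1)$, and the identity above gives $\ov\xi_{\ov\sigma(i)}=\ov\xi_{-(m+1)}=\xi_{-m}=\xi_{\sigma(i)}$. For $i=j+1$ we have $\ov\sigma(i)=\pm1$ (according to the parity of $j$), whence $\ov\xi_{\ov\sigma(i)}=\ov\xi_{\pm1}=0$. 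For $i\ge j+2$ we have $\sigma(i-1)\in\N$ and $\ov\sigma(i)=\sigma(i-1)+1$, so $\ov\xi_{\ov\sigma(i)}=\ov\xi_{\sigma(i-1)+1}=\xi_{\sigma(i-1)}$.

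Assembling the three ranges, the $i\le j$ terms reproduce exactly $\{\xi_{\sigma(i)}\mid i\le j\}$, the single term $i=j+1$ contributes the value $0$, and the $i\ge j+2$ terms reproduce $\{\xi_{\sigma(i-1)}\mid i-1>j\}=\{\xi_{\sigma(\ell)}\mid \ell>j\}$; together these give $\{\xi_{\sigma(i)}\mid i\in\N\}\cup\{0\}$. Concretely, the correspondence $i\mapsto i$ on $\{i\le j\}$, the adjoined element $0$ sent to the index $j+1$, and $i\mapsto i+1$ on $\{i>j\}$ is a bijection of index sets preserving values; since the $\xi_i$ are strictly decreasing, the numbers $0$ and all the $\pm\xi_i$ are pairwise distinct, so equality of the value-sets follows. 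The only point requiring care — and the sole place the argument could go wrong — is the negative branch $i\le j$, where the shift $\ov\sigma(i)=\sigma(i)-1$ of the $\sigma$-index must be compensated exactly by the reindexing $\ov\xi_{-(k+1)}=\xi_{-k}$ of the weight sequence; once the sign and shift conventions are lined up as above, this matching is immediate and the lemma follows.
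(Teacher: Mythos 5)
Your proof is correct and is essentially the argument the paper intends: the paper's own proof consists of the single remark that the lemma ``follows from the definition of $\ov{\sigma}$,'' and your case-by-case substitution over the ranges $i\le j$, $i=j+1$, $i\ge j+2$ is exactly the verification being left to the reader. The index bookkeeping on the negative branch ($\ov{\xi}_{-(k+1)}=\xi_{-k}$ compensating $\ov{\sigma}(i)=\sigma(i)-1$) is handled correctly.
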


\subsection{Unitarizable highest weight modules}\label{hw}
Recall that $\g$ stand for $\mathfrak{a,c,d}$, and $\overline{\mathfrak{a}}=\mathfrak{a}$,
$\overline{\mathfrak{c}}=\mathfrak{d}$ and $\overline{\mathfrak{d}}=\mathfrak{c}$.
In this subsection we classify the highest weights of irreducible unitarizable quasi-finite highest weight $\ov\g_\infty$-modules with respect to the anti-linear anti-involution $\omega$ defined below.

For a partition $\lambda=(\lambda_1,\lambda_2,\cdots)$, the
transpose partition of $\lambda$ is denoted by $\lambda'=(\lambda'_1,
\lambda'_2,\cdots)$. For $\mathfrak{g=c, d}$, a
partition $\lambda$ and $d\in\C$, define
\begin{eqnarray}
\quad \Lambda^\g(\lambda,d):=\sum_{i\in\N}\lambda^\prime_i\epsilon_i+d\vartheta_\g\in
\h_\g^\ast,\quad
\SLa^\g(\lambda,d)=\sum_{i\in\N}\lambda_i\epsilon_i -\frac{d\langle\vartheta_\g,K\rangle}{\langle\vartheta_{\ov{\g}},K\rangle}\vartheta_{\ov{\g}}\in \h_{\ov{\g}}^\ast.
\end{eqnarray}
Let $\mc{D}(\g)$ denote the set of pairs $(\la, d)$ with $d\in\Z_+$ satisfying $\la'_1\le d$ if $\g=\mf{c}$; and $\la'_1+\la'_2\le d$ if $\g=\mf{d}$.
For a pair of partitions $\la=(\la^-,\la^+)$ and $d\in\C$,
define $\Lambda^\mathfrak{a}(\lambda,d), \SLa^\mathfrak{a}(\lambda,d)\in\mathfrak{h}_\mathfrak{a}^\ast$ by
\begin{eqnarray*}
&& \Lambda^\mathfrak{a}(\lambda,d)=-\sum_{i\in\Z_+}(\la^-)'_{i+1}\epsilon_{-i} + \sum_{i\in\N}(\la^+)'_{i}\epsilon_i + d\vartheta_\mf{a},
\\&&
\SLa^\mathfrak{a}(\lambda,d)=-\sum_{i\in\Z_+}\la^-_{i+1}\epsilon_{-i} + \sum_{i\in\N}\la^+_{i}\epsilon_i - d\vartheta_\mf{a}.
\end{eqnarray*}
Let $\mc{D}(\mf{a})$ denote the set of pairs $(\la, d)$ satisfying $d\in\Z_+$ and  $(\la^-)'_1+(\la^+)'_1\le d$.

Let $\mf{k}$ be a Lie algebra equipped with an anti-linear anti-involution $\omega$, and
let $V$ be a $\mf{k}$-module. A Hermitian form $\langle\
\cdot\ | \ \cdot \ \rangle$ on $V$ is said to be contravariant if
$\langle a v | v'\rangle = \langle v | \omega(a) v'\rangle$, for
all $a\in \mf{k}$, $v, v'\in V$. A $\mf{k}$-module equipped with a positive
definite contravariant Hermitian form is called a unitarizable
$\mf{k}$-module. Assume that $\mf{k} = \oplus_{j \in \Z} \mf{k}_j$ (possibly
$\dim{\mf{k}_j}=\infty$) is a $\Z$-graded Lie algebra and $\mf{k}_0$ is
abelian. A graded $\mf{k}$-module $M = \oplus_{j \in \Z} M_j $
is called quasi-finite if $ \dim M_j < \infty$ for all
$j\in\Z$ \cite{KR}.

\begin{rem}\label{integrable} Let $V$ be a highest weight $\g_\infty$-module with highest weight $\xi$. Using the arguments as in \cite[Section 4]{LZ}, we have $V$ is quasi-finite if and only if $\xi$ satisfies $\xi(E_{ii})=0$ (resp. $\xi(\widetilde{E}_{ii})=0$) for $|i|\gg 0$ (resp. $i\gg 0$) for $\g=\mf{a}$ (resp. $\mf{c,d}$). Therefore every quasi-finite integrable highest weight $\g_\infty$-module is of the form  $L(\g_\infty,{\Lambda^\g(\la,d)})$ for some $(\la,d)\in\mc{D}(\g)$.
\end{rem}

Now we consider the anti-linear anti-involution $\omega$ on $\mf{a}_\infty$ defined by
\begin{equation*}
\begin{aligned}
\omega(E_{ij})=
\begin{cases}
E_{ji}, & \text{for $i,j \le 0$ or $i,j > 0$}; \\
 -E_{ji}, & \text{for $i>0,j \le 0$ or $i\le 0,j > 0$},
\end{cases}\,\,\, \text{and} \quad
\omega(K)=K.
\end{aligned}
\end{equation*}
For $\mf{g}=\mf{c,d}$, the
restriction of the anti-linear anti-involution $\omega$ on $\mf{a}_\infty$ to $\g_\infty$ gives an anti-linear
anti-involution on $\g_\infty$,  which will also be denoted by $\omega$.

For $d\in\C$ and a pair of partitions $\la=(\la^-,\la^+)$ with $\la_{n+1}^+=\la_{m+1}^-=0$, let $\Gamma_{\mf{t}\ov{\mf{a}}}(\la,d)$ be the element in $\h_{\mf{t}\ov{\mf{a}}}^*$ determined by
\begin{equation*}
\Gamma_{\mf{t}\ov{\mf{a}}}(\la,d)
=\sum_{i=1}^m(-d-\la^-_i)\epsilon_{-i+1} +\sum_{i=1}^n\la^+_i\epsilon_i.
\end{equation*}
For $d\in\C$ and a partition $\la$ satisfying $\la_{n+1}=0$, let $\Gamma_{\mf{t}\ov{\g}}(\la,d)$ be the element in $h_{\mf{t}\ov{\g}}^*$ determined by
\begin{equation*}
\begin{aligned}
\Gamma_{\mf{t}\ov{\g}}(\la,d)
=\begin{cases}\sum_{i=1}^n(\la_i+\frac{d}{2})\epsilon_i,
&\quad \text{for } \ov{\g}=\mf{c},\\
\sum_{i=1}^n(\la_i+d)\epsilon_i,
&\quad \text{for } \ov{\g}=\mf{d}.
\end{cases}\end{aligned}
\end{equation*}
Let $\mc{D}_\mf{t}(\g)$ denote the subset of $\mc{D}(\g)$ consisting of elements in $(\la,d)$ satisfying $\la_{n+1}=0$  for ${{\ov{\g}}}=\mf{c, d}$ (resp. $\la_{n+1}^+=0$ and $\la_{m+1}^-=0$ for ${{\ov{\g}}}=\mf{a}$).

Now we introduce the truncation functors \cite[Section 3.4]{CLW}. Let $M=\bigoplus_{\beta}M_\beta$ be a semisimple $\h_{\ov\g}$-module such that $M_\beta$ is the weight space of $M$ with weight $\beta$. The truncation functor $\mf{tr}_{\mf{t}\ov\h}$ is defined by sending $M$ to $\bigoplus_{\nu}M_\nu$, summed over
$\sum_{i=1-m}^{n}\C \epsilon_i+ \C\vartheta_{\ov\g}$ (resp. $\sum_{i=1}^{n}\C \epsilon_i+ \C\vartheta_{\ov\g}$) for $\ov{\g}=\mf{a}$ (resp. $\mf{c,d}$). For $(\la,d)\in \mc{D}(\g)$,  $L(\ov\g_\infty,\SLa^\g(\la,d))$ is a ${\mf{t}\ov{\g}}$-module through the embedding $\iota_{\mathring{\ov{\g}}}$ defined in \secref{f.d.LA}. $\mf{tr}_{\mf{t}\ov\h}(L(\ov\g_\infty,\SLa^\g(\la,d)))$ is an irreducible ${\mf{t}\ov{\g}}$-module and \begin{equation}\label{trf}
\mf{tr}_{\mf{t}\ov\h}(L(\ov\g_\infty,\SLa^\g(\la,d))) =L({\mf{t}\ov{\g}},\Gamma_{\mf{t}\ov{\g}}(\la,d))
\end{equation}
for any partition $\la$ with $\la_{n+1}=0$ and $\ov{\g}=\mf{c,d}$ \cite[Lemma 3.2]{CLW}. The same result is also true for $\ov{\g}=\mf{a}$ and pair of partitions $\la=(\la^-,\la^+)$ with $\la_{n+1}^+=\la_{m+1}^-=0$ by using the same arguments as in \cite{CLW}. The anti-linear anti-involution $\omega$ on $\ov{\g}_\infty$ induces an anti-linear
anti-involution on $\mf{t}\ov{\g}$,  which will also be denoted by $\omega$.

By cumbersome but straight forward computations, the following theorem is reformulated the Theorem 2.4 and some results of sections $7,8,9$ in \cite{EHW} in terms of partitions.

\begin{thm}\label{EHW} For ${\mf{g}}=\mf{a, c,d}$, let $\xi\in\h_{\mf{t}\ov{\g}}^*$.
\begin{itemize}
\item[i.]   $L({\mf{t}\ov{\mf{a}}},\xi)$ is
unitarizable with respect to $\omega$  if and
only if $\xi=\Gamma_{\mf{t}\ov{\mf{a}}}(\la,d)+k\sum_{i=-m+1}^{n}\epsilon_i$ for some pair of partitions $\la=(\la^+,\la^-)$ with $\la^-_m=\la^+_n=0$ and $d, k\in \R$ satisfying $d\ge {\rm min}\{(\la^-)'_1+n-1, (\la^+)'_1+m-1\}$, or $d\in \Z$ and $d\ge (\la^-)'_1+(\la^+)'_1$. Moreover, $N({\mf{t}\ov{\mf{a}}},\Gamma_{\mf{t}\ov{\mf{a}}}(\la,d)+k\sum_{i=-m+1}^{n}\epsilon_i)$ are irreducible for pair of partitions $\la=(\la^+,\la^-)$ with $\la^-_m=\la^+_n=0$ and $d, k\in \R$ satisfying $d> {\rm min}\{(\la^-)'_1+n-1, (\la^+)'_1+m-1\}$.
\item[ii.] $L({\mf{t}\ov{\mf{d}}},\xi)$ is
unitarizable with respect to $\omega$  if and
only if $\xi=\Gamma_{\mf{t}\ov{\mf{d}}}(\la,d)$ for some partition $\la$ with $\la_n=0$ and $d\in \R$ satisfying $d\ge n-1+\la'_2$, or $d\in \Z$ and $d\ge \la'_1+\la'_2$. Moreover, $N({\mf{t}\ov{\mf{d}}},\Gamma_{\mf{t}\ov{\mf{d}}}(\la,d))$ are irreducible for partition $\la$  with $\la_n=0$ and $d>n-1+\la'_2$.
\item[iii.] Assume that $\xi\in\h_{\mf{t}\ov{\mf{c}}}^*$ with $\xi(\wt{E}_{n-1})=\xi(\wt{E}_n)$. $L({\mf{t}\ov{\mf{c}}},\xi)$ is
unitarizable with respect to $\omega$  if and
only if $\xi=\Gamma_{\mf{t}\ov{\mf{c}}}(\la,d)$ for some partition $\la$  with $\la_{n-1}=\la_n=0$ and $d\in \R$ satisfying $d\ge \hf(\la'_1+n)-1$ if $n-\la'_1$ is even; $d\ge \hf(\la'_1+n-1)-1$ if $n-\la'_1$ is odd, or $d\in \Z$ and $d\ge \la'_1$. Moreover, $N({\mf{t}\ov{\mf{c}}},\Gamma_{\mf{t}\ov{\mf{c}}}(\la,d))$ are irreducible for partition $\la$  with $\la_{n-1}=\la_n=0$ and $d\in \R$ satisfying $d> \hf(\la'_1+n)-1$ if $n-\la'_1$ is even; $d> \hf(\la'_1+n-1)-1$ if $n-\la'_1$ is odd.
\end{itemize}
\end{thm}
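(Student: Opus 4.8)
The plan is to deduce all three statements directly from the Enright--Howe--Wallach classification, the work consisting of translating their intrinsic root-theoretic data into the transpose-partition quantities $\la'_1,\la'_2$ appearing above. Recall the shape of \cite{EHW}: for a Hermitian symmetric pair one fixes the unique noncompact simple root, writes a highest weight as $\lambda=\lambda_0+z\zeta$ with $\lambda_0$ dominant integral for the Levi $\mf{l}_{\mf{t}\ov{\g}}$ and $\zeta$ the fundamental weight orthogonal to the compact roots, and their Theorem~2.4 asserts that the set of $z$ for which $L(\mf{t}\ov{\g},\lambda)$ is unitarizable is a half-line together with a finite arithmetic progression of discrete points below it, down to the last reduction point, the spacing being governed by the relevant root length. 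First I would put $\lambda=\Gamma_{\mf{t}\ov{\g}}(\la,d)$ (plus $k\sum_{i=-m+1}^{n}\epsilon_i$ when $\ov{\g}=\mf{a}$), read off $\lambda_0$ as the partition part and $d$ as the coordinate along $\zeta$, and observe that the normalization of $\Gamma_{\mf{t}\ov{\g}}$ is chosen so that larger $d$ is more antidominant in the noncompact direction; hence the continuous half-line becomes a condition $d\ge A$ and the discrete points become the integral condition $d\in\Z$, $d\ge B$.

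Then I would carry out the substitution case by case, corresponding to the three classical cases treated in sections 7, 8 and 9 of \cite{EHW}. For $\mf{t}\ov{\mf{a}}\cong\mf{gl}(m+n)$ the term $k\sum_{i=-m+1}^{n}\epsilon_i$ lies in the center and is irrelevant to unitarity, so only $d$ is constrained; the two competing reduction estimates coming from the two blocks of $\mf{l}_{\mf{t}\ov{\mf{a}}}\cong\mf{gl}(m)\oplus\mf{gl}(n)$ produce the quantities $(\la^-)'_1+n-1$ and $(\la^+)'_1+m-1$, whose minimum is the first reduction point, while the last reduction point yields the integral threshold $(\la^-)'_1+(\la^+)'_1$. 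For $\mf{t}\ov{\mf{d}}\cong\mf{sp}(2n)$ I would compute the EHW first reduction point to be $n-1+\la'_2$ and the last to be $\la'_1+\la'_2$, with unit spacing coming from the long noncompact root of $C_n$. For $\mf{t}\ov{\mf{c}}\cong\mf{so}(2n)$ the half-integer normalization $\Gamma_{\mf{t}\ov{\mf{c}}}=\sum_i(\la_i+\tfrac d2)\epsilon_i$ produces the two branches $\hf(\la'_1+n)-1$ and $\hf(\la'_1+n-1)-1$ according to the parity of $n-\la'_1$, with integral threshold $\la'_1$. In each case the stated inequalities are then precisely EHW's unitarizability range rewritten in $\la'$.

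For the irreducibility assertions on the parabolic Verma modules $N(\mf{t}\ov{\g},\cdot)$ I would invoke the standard fact that in the Hermitian setting $N(\mf{t}\ov{\g},\lambda)$ is irreducible exactly when $\lambda$ lies strictly above the first reduction point---equivalently, when no noncompact positive root yields a primitive vector---so that irreducibility holds on the open continuous range and fails at its boundary. This matches the strict bounds $d>\min\{(\la^-)'_1+n-1,(\la^+)'_1+m-1\}$, $d>n-1+\la'_2$, and $d>\hf(\la'_1+n)-1$ (resp. $d>\hf(\la'_1+n-1)-1$) in the three parts.

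The main obstacle is purely computational bookkeeping: EHW's reduction points are expressed through the pairings $\langle\lambda+\rho,\gamma^\vee\rangle$ for roots $\gamma$ in the noncompact sequence attached to the minimal $\mf{k}$-type, and converting these into the transpose data $\la'_1,\la'_2$ requires tracking how $\rho_{\mf{t}\ov{\g}}$ and the coroots act on $\Gamma_{\mf{t}\ov{\g}}(\la,d)$ in each type. The subtlest point is the parity dichotomy in part (iii): for $\mf{so}(2n)$ the reduction pattern depends on whether $\la'_1$ has the same parity as $n$, reflecting the two conjugacy classes of noncompact roots in $D_n$, and pinning down the exact constants $\hf(\la'_1+n)-1$ versus $\hf(\la'_1+n-1)-1$ is the delicate step. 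The half-integer spacing in the $\mf{so}$ case and the correct identification of the free central parameter $k$ in the $\mf{gl}$ case are the remaining places where signs and normalizations must be handled with care.
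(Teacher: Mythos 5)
Your proposal follows essentially the same route as the paper, which presents this theorem precisely as a reformulation, ``by cumbersome but straightforward computations,'' of Theorem 2.4 and the case-by-case results of Sections 7--9 of \cite{EHW} in terms of transpose partitions. Your sketch of the translation (first/last reduction points becoming the continuous and integral thresholds, the central parameter $k$ in the $\mf{gl}$ case, the parity dichotomy for $\mf{so}(2n)$, and irreducibility of $N(\mf{t}\ov{\g},\cdot)$ strictly above the first reduction point) is exactly the bookkeeping the authors allude to without writing out.
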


\begin{prop}\label{unitary} For ${\mf{g}}=\mf{a, c,d}$, let $L(\ov{\mf{g}}_\infty,\xi)$ be an irreducible quasi-finite highest weight
$\ov{\g}_\infty$-module with highest weight $\xi$. Then $L(\ov{\mf{g}}_\infty,\xi)$ is
unitarizable with respect to the anti-linear anti-involution $\omega$  if and
only if $\xi=\SLa^\g(\la, d)$ for some $(\la, d)\in \mc{D}(\g)$.
\end{prop}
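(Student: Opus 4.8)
The plan is to reduce unitarizability over the infinite-rank algebra $\ov\g_\infty$ to unitarizability over the finite-rank subalgebras $\mf{t}\ov\g$ classified in \thmref{EHW}, and then to let the rank tend to infinity so that only the integral conditions defining $\mc{D}(\g)$ survive.

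First I would set up the reduction to finite rank. Write $\ov\g_\infty=\bigcup \mf{t}\ov\g$ as the union of the nested subalgebras $\mf{t}\ov\g$ (as $n\to\infty$, resp. $m,n\to\infty$ for $\g=\mf{a}$), each stable under $\omega$, with $\omega$ restricting to the induced anti-involution on $\mf{t}\ov\g$. Let $v_\xi$ be a highest weight vector of $L:=L(\ov\g_\infty,\xi)$. Since $v_\xi$ is annihilated by every positive root vector and $\ov\g_\infty^-=\bigcup(\mf{t}\ov\g)^-$, we have $L=\bigcup V$, where $V:=U(\mf{t}\ov\g)v_\xi$ is the cyclic highest weight $\mf{t}\ov\g$-submodule generated by $v_\xi$, whose highest weight is the restriction $\mu:=\iota_{\mathring{\ov\g}}^{*}\xi\in\h_{\mf{t}\ov\g}^*$. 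There is a unique contravariant Hermitian form on $L$ with $\langle v_\xi|v_\xi\rangle=1$; since $L$ is irreducible this form is nondegenerate, so $L$ is unitarizable iff it is positive semidefinite, iff it is positive semidefinite on every $V$. The restricted form on $V$ is contravariant for $\omega|_{\mf{t}\ov\g}$ and descends to the normalized contravariant form on $V/\mathrm{rad}\cong L(\mf{t}\ov\g,\mu)$; hence it is positive semidefinite on $V$ iff $L(\mf{t}\ov\g,\mu)$ is unitarizable. Thus $L(\ov\g_\infty,\xi)$ is unitarizable iff $L(\mf{t}\ov\g,\mu)$ is unitarizable for all sufficiently large ranks.

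Next I would identify the restricted weight $\mu$. By the quasi-finiteness criterion of \remref{integrable} applied to $\ov\g_\infty$, the weight $\xi$ is finitely supported: $\xi(\wt E_i)=0$ for $i\gg0$ (resp. $\xi(E_{ii})=0$ for $|i|\gg0$). Using $\iota_{\mathring{\ov\g}}(\wt E_i)=\wt E_i-K$ (resp. $\iota_{\mathring{\mf a}}(E_{ii})=E_{ii}+K$ for $i\le0$ and $E_{ii}$ for $i\ge1$), a direct computation on the Cartan shows, for large rank, that $\xi=\SLa^\g(\la,d)$ precisely when $\mu=\Gamma_{\mf{t}\ov\g}(\la,d)$; this is exactly the content of \eqnref{trf}. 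Conversely, if every $L(\mf{t}\ov\g,\mu)$ is unitarizable then \thmref{EHW} forces $\mu$ to have the form $\Gamma_{\mf{t}\ov\g}(\la^{(n)},d^{(n)})$ (plus a central shift $k\sum\epsilon_i$ for $\g=\mf{a}$). Since $\xi(K)$ is independent of the rank and the partition tail vanishes for large rank (so that, for $\g=\mf{c}$, the hypothesis $\xi(\wt E_{n-1})=\xi(\wt E_n)$ of \thmref{EHW} holds automatically), this pins down a single partition $\la$ (resp. pair of partitions) with $\la_i=\xi(\wt E_i)$ and a single $d=-\xi(K)$ normalized as in $\SLa^\g$, giving $\xi=\SLa^\g(\la,d)$; for $\g=\mf{a}$ the normalization that the partition tails vanish makes the decomposition unique for large $m,n$, hence $k=0$.

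Finally I would run the limit. With $\mu=\Gamma_{\mf{t}\ov\g}(\la,d)$, \thmref{EHW} asserts unitarizability iff either a continuous bound holds ($d\ge n-1+\la'_2$ for $\mf{t}\ov{\mf d}=\mf{sp}(2n)$, $d\ge\tfrac12(\la'_1+n)-1$ or $\tfrac12(\la'_1+n-1)-1$ for $\mf{t}\ov{\mf c}=\mf{so}(2n)$, and $d\ge\min\{(\la^-)'_1+n-1,(\la^+)'_1+m-1\}$ for $\mf{t}\ov{\mf a}=\gl(m+n)$), or the corresponding integral bound holds ($d\in\Z$ with $d\ge\la'_1+\la'_2$, $d\ge\la'_1$, $d\ge(\la^-)'_1+(\la^+)'_1$ respectively). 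As $\la$ is finitely supported and $d$ is fixed, all the continuous bounds tend to $+\infty$ with the rank and therefore fail for large $n$ (resp. $m,n$); hence unitarizability of all truncations is equivalent to the integral bound, i.e. to $(\la,d)\in\mc{D}(\g)$. Combining this with the reduction and the identification of $\mu$ yields the proposition. I expect the main obstacle to be the reduction step: making precise, via nondegeneracy of the form forced by irreducibility, that unitarizability of $L(\ov\g_\infty,\xi)$ is equivalent to positive semidefiniteness on each finite cyclic submodule $V$, and that this matches unitarizability of the truncated irreducible $L(\mf{t}\ov\g,\mu)$. The remaining work — the Cartan computations identifying $\mu$ with $\Gamma_{\mf{t}\ov\g}(\la,d)$ and the bookkeeping with the extra central parameter $k$ in the $\gl$ case — is cumbersome but routine.
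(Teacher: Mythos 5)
Your proof is correct, and its ``only if'' half is essentially the paper's: both pass to the finite-rank truncations $\mf{t}\ov\g$ (the paper first pins down $\xi=\SLa^\g(\la,d)$ with $d\in\R$ from integrability along the compact simple roots, then truncates with rank $\gg d$), and both conclude from \thmref{EHW} that the non-integral unitarity bounds there grow with the rank and must eventually fail, leaving exactly the condition $(\la,d)\in\mc{D}(\g)$. Where you genuinely diverge is the converse. The paper disposes of it in one sentence by citing the Howe dualities at negative levels of \cite{W}: the modules $L(\ov\g_\infty,\SLa^\g(\la,d))$ occur as summands of a Fock space carrying a positive definite contravariant form. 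You instead build positivity from below: $L=\bigcup V$ with $V=U(\mf{t}\ov\g)v_\xi$, the restricted form on $V$ is the unique normalized contravariant form whose radical is the maximal proper submodule with quotient $L(\mf{t}\ov\g,\Gamma_{\mf{t}\ov\g}(\la,d))$, so \thmref{EHW} gives positive semidefiniteness on each $V$, hence on $L$, hence positive definiteness by irreducibility. This is a valid direct-limit argument that makes the proposition self-contained modulo \thmref{EHW}, at the cost of the form-theoretic bookkeeping; the paper's route is shorter but imports the oscillator realization. Two points you should make explicit: the existence of a contravariant \emph{Hermitian} form on $L(\ov\g_\infty,\SLa^\g(\la,d))$ needs $\xi$ to be real-valued on the $\omega$-fixed part of the Cartan subalgebra (which holds here), and for $\g=\mf{a}$ the central shift $k$ allowed in \thmref{EHW}(i) must be ruled out for large $m,n$ via $\xi(E_{nn})=0$, which you do address.
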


\begin{proof}
Let $L(\ov{\mf{g}}_\infty,\xi)$ be a unitarizable irreducible quasi-finite highest weight
$\ov{\g}_\infty$-module. By \remref{integrable}, $\xi$ satisfies $\xi(E_{ii})=0$ (resp. $\xi(\wt{E}_{ii})=0$) for $|i|\gg 0$ (resp. $i\gg 0$) for $\ov{\mf{g}}=\ov{\mf{a}}$ (resp. $\ov{\mf{c}},\ov{\mf{d}}$). It is easy to see that $d\in\R$ and $\xi(\wt{E}_{ii})-\xi(\wt{E}_{i+1,i+1})\in\Z_+$ (resp. $\xi(E_{ii})-\xi(E_{i+1,i+1})\in\Z_+$) for all $i$ (resp. $i\not=0$) for $\ov{\mf{g}}=\ov{\mf{c}},\ov{\mf{d}}$ (resp. $\ov{\mf{a}}$). This implies $\xi=\SLa^\g(\la, d)$ for some partition $\lambda$ (resp. pair of partitions $\la=(\la^+,\la^-)$) and $d\in\R$ for $\ov{\mf{g}}=\ov{\mf{c}},\ov{\mf{d}}$ (resp. $\ov{\mf{a}}$). Now applying truncation functor to $L(\ov{\mf{g}}_\infty,\xi)$ with $n\gg d$ (resp. $m, n\gg d$) for $\ov{\mf{g}}=\ov{\mf{c}},\ov{\mf{d}}$ (resp. $\ov{\mf{a}}$),   $\mf{tr}_{\mf{t}\ov\h}(L(\ov{\mf{g}}_\infty,\xi))$ is a unitarizable ${\mf{t}\ov{\g}}$-module with respect to $\omega$. By \thmref{EHW} and \eqnref{trf},  we have $d\in \Z$ and $(\la, d)\in \mc{D}_{\mf{t}}(\g)$. Hence $\xi=\SLa^\g(\la, d)$ for some $(\la, d)\in \mc{D}(\g)$. Conversely, the irreducible highest weight $\ov{\g}_\infty$-modules $L(\ov{\mf{g}}_\infty,\SLa^\g(\la, d))$ are modules appearing in the Howe dualities at negative levels described in \cite{W}. These modules are
unitarizable and quasi-finite. The proof is completed.
\end{proof}

\begin{rem}\label{rem:unitary}
The modules described in the proposition are modules appearing in the Howe dualities at negative levels described in \cite{W} (cf. \cite[Theorem 5.6, 5.8, 5.9]{LZ}).
\end{rem}

\section{Numerical data of the highest weights}\label{hw-data}
In this section, we shall provide combinatorial descriptions of $\SLa^\g(\lambda,d)$ in terms of ${\Lambda}^\g(\lambda,d)$.

\begin{definition} Let $\{a_i\}_{i\in\N}$ and $\{b_i\}_{i\in\N}$ be two strictly
decreasing sequences of integers (resp. half integers). Then the sequences $\{a_i\}_{i\in\N}$ and $\{b_i\}_{i\in\N}$ are said to form a dual pair if $\Z$ (resp. $\frac{1}{2}+\Z$) is the disjoint
union of the two sequences $\{a_i\}_{i\in\N}$ and $\{-b_i\}_{i\in\N}$.
\end{definition}

Define the function $\rho$ on $\N$ by $\rho(i)=-i$ for all $i\in\N$.
The following lemma is well known (see e.g.~\cite[(1.7)]{M}).

\begin{lem}\label{dual pair} For any partition $\la$, the sequences $\{\la_i+\rho(i)\}_{i\in\N}$ and $\{\la'_i+\rho(i)+1\}_{i\in\N}$ form a dual pair.
\end{lem}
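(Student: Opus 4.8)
The plan is to unwind the definitions and recognize this as a classical statement about the ``beta-set'' or ``Frobenius-type'' encoding of a partition via its first-column hook lengths, reinterpreted through the diagonal reading of the Young diagram. Concretely, I would set $a_i := \la_i + \rho(i) = \la_i - i$ and $b_i := \la'_i + \rho(i) + 1 = \la'_i - i + 1$, and I must show that $\{a_i\}_{i\in\N}$ and $\{-b_i\}_{i\in\N}$ partition $\Z$ into disjoint pieces whose union is all of $\Z$. Note $-b_i = i - 1 - \la'_i$. First I would check that each sequence is strictly decreasing: since $\la_{i+1}\le \la_i$ we get $a_{i+1} = \la_{i+1} - (i+1) < \la_i - i = a_i$, and identically for $\{b_i\}$ using $\la'_{i+1}\le\la'_i$; hence $\{-b_i\}$ is strictly increasing.

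The conceptual heart is a bijection between the integers and the boundary lattice path of the Young diagram of $\la$. The standard device is to read the infinite boundary of $\la$ (the lattice path separating the diagram from its complement in the quadrant, extended infinitely) as a sequence of unit horizontal and vertical steps indexed by $\Z$, and to label each step by its position. Horizontal steps correspond to the integers of the form $i - 1 - \la'_i = -b_i$ (reading the columns), while vertical steps correspond to $\la_i - i = a_i$ (reading the rows). The key combinatorial fact is that every integer labels exactly one step, and each step is either horizontal or vertical but not both; this gives simultaneously the disjointness and the exhaustion. I would make this precise by checking, for a fixed integer $m\in\Z$, that exactly one of the following holds: either there is an $i$ with $\la_i - i = m$ (equivalently, there is a row whose boundary contributes a vertical step at height $m$), or there is an $i$ with $i - 1 - \la'_i = m$ (a column contributing a horizontal step), and never both.

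The cleanest self-contained argument I would carry out avoids drawing pictures and instead proves the two halves directly. For \emph{disjointness}, suppose $\la_i - i = j - 1 - \la'_j$ for some $i,j\in\N$; rearranging gives $\la_i + \la'_j = i + j - 1$, and one checks via the defining property of the transpose (that $(r,s)$ lies in the diagram of $\la$ iff $s\le\la_r$ iff $r\le\la'_s$) that this equation forces a contradiction by comparing whether the cell $(i,j)$ or $(i,j)$-adjacent position lies inside or outside $\la$; the inequality $\la_i \ge j \Leftrightarrow \la'_j \ge i$ splits into the two cases $\la_i\ge j$ and $\la_i < j$, each of which contradicts $\la_i+\la'_j=i+j-1$. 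For \emph{exhaustion}, I would fix $m\in\Z$ and use that for $N\gg 0$ the tail values stabilize: $a_i = -i$ and $-b_i = i-1$ for all $i$ large (since $\la_i=\la'_i=0$ eventually), so large negative integers are all hit by $\{a_i\}$ and large positive integers all by $\{-b_i\}$; a counting/monotonicity argument on the finite window then shows every intermediate integer is covered. The main obstacle is packaging the exhaustion cleanly: rather than an ad hoc count, I would prefer to invoke that two strictly monotone integer sequences with the stated asymptotics, proven disjoint, must together exhaust $\Z$ precisely when the ``density'' bookkeeping matches, which here follows because the number of $a_i$ exceeding any threshold plus the number of $-b_i$ below it accounts for every integer with no overlap. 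Since the statement is cited as well known from \cite{M}, I would keep this verification brief and lean on the row/column duality $s\le\la_r \Leftrightarrow r\le \la'_s$ as the single underlying fact.
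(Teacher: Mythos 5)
Your proposal is correct. Note that the paper itself offers no proof of this lemma at all: it simply declares the statement well known and cites \cite{M}, (1.7), so there is nothing in the paper to compare your argument against step by step. Your self-contained argument is the standard one (the Maya-diagram/boundary-path encoding of a partition), and both halves go through. The disjointness step is airtight as written: if $\la_i-i=j-1-\la'_j$, i.e.\ $\la_i+\la'_j=i+j-1$, then $\la_i\ge j$ forces $\la'_j\ge i$ and hence a sum $\ge i+j$, while $\la_i\le j-1$ forces $\la'_j\le i-1$ and hence a sum $\le i+j-2$; either way a contradiction. The exhaustion step is the only place you leave something to the reader, and I would tighten it with the explicit pigeonhole count rather than the informal ``density bookkeeping'': choose $N$ with $\la_N=\la'_N=0$ and $\la_1,\la'_1\le N$; then the $N$ values $\la_i-i$ ($1\le i\le N$) and the $N$ values $i-1-\la'_i$ ($1\le i\le N$) are $2N$ pairwise distinct integers all lying in the window $\{-N,\dots,N-1\}$, which has exactly $2N$ elements, so they fill it; outside the window the two tails $\{-i : i>N\}$ and $\{i-1 : i>N\}$ cover the rest of $\Z$. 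With that one sentence added, your proof is complete and is a reasonable substitute for the citation.
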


Recall that $\phi_{\g}=\sum_{i\in \N}\epsilon_i$ for $\g=\mathfrak{c,d}$.

\begin{lem}\label{zeta-bcd}
For $\g=\mf{c,d}$ and $(\la, d)\in \mc{D}(\g)$, let $\{\zeta_i\}_{i\in\N}$ and $\{\ov\zeta_i\}_{i\in\N}$ be two sequences determined by
\begin{align*}
\Lambda^\g(\la,d)+\rho_\g-d\langle\vartheta_\g,K\rangle\phi_\g&=\sum_{i\in I_\g}\zeta_i\epsilon_i +d\vartheta_\g,\\ \SLa^\g(\lambda,d)+\rho_{\ov\g}+d\langle\vartheta_\g,K\rangle\phi_{\ov\g}
&=\sum_{i\in I_\g}\ov\zeta_i\epsilon_i -\frac{d\langle\vartheta_\g,K\rangle}{\langle\vartheta_{\ov{\g}},K\rangle}\vartheta_{\ov{\g}}.
\end{align*}
Then $\{\zeta_i\}_{i\in\N}$ and $\{\ov\zeta_i\}_{i\in\N}$ form a dual pair. Moreover, $\zeta_i<0$ for $i\in\N$ and $\g\not=\mf{d}$.  In the case $\g=\mf{d}$, $\zeta_i<0$ for $i\ge 2$, and $\zeta_1<0$ (resp. $=0$ and $>0$) for $\la'_1< \frac{d}{2}$ (resp. $=\frac{d}{2}$ and $>\frac{d}{2}$).
\end{lem}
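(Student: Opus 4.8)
The plan is to evaluate $\zeta_i$ and $\ov\zeta_i$ in closed form and then read off the dual-pair property from \lemref{dual pair} and the sign information from the defining inequalities of $\mc{D}(\g)$. First I would express $\rho_\g$ in the basis $\{\epsilon_i\}$: since $\langle\rho_\g,K\rangle=0$ while $\langle\vartheta_\g,K\rangle\ne0$, one has $\rho_\g=\sum_{i\in\N}\langle\rho_\g,\wt{E}_i\rangle\epsilon_i$ with $\langle\rho_\g,\wt{E}_i\rangle=\rho(i)$ for $\g=\mf{c}$ and $=\rho(i)+1$ for $\g=\mf{d}$. Matching the $\epsilon_i$-coefficients in the two defining identities, and observing that the $\vartheta$-terms agree automatically (the $\vartheta_{\ov\g}$-part of $\SLa^\g(\la,d)$ is exactly $-\tfrac{d\langle\vartheta_\g,K\rangle}{\langle\vartheta_{\ov\g},K\rangle}\vartheta_{\ov\g}$, while $\rho_{\ov\g}$ and $\phi_{\ov\g}$ contribute none), gives, using $\langle\vartheta_\mf{c},K\rangle=1$ and $\langle\vartheta_\mf{d},K\rangle=\hf$,
\[
\zeta_i=\la'_i+\rho(i)-d,\qquad \ov\zeta_i=\la_i+\rho(i)+1+d\qquad(\g=\mf{c}),
\]
\[
\zeta_i=\la'_i+\rho(i)+1-\tfrac{d}{2},\qquad \ov\zeta_i=\la_i+\rho(i)+\tfrac{d}{2}\qquad(\g=\mf{d}).
\]
Since $\la$ and $\la'$ are partitions and $d\in\Z_+$, each of $\{\zeta_i\}$ and $\{\ov\zeta_i\}$ is a strictly decreasing sequence, with entries in $\Z$ (for $\g=\mf{c}$, or $\g=\mf{d}$ with $d$ even) and in $\hf+\Z$ (for $\g=\mf{d}$ with $d$ odd).

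For the dual-pair claim I would feed the appropriate partition into \lemref{dual pair}. For $\g=\mf{c}$, applying \lemref{dual pair} to the partition $\la'$ (whose transpose is $\la$) gives $\Z=\{\la'_i+\rho(i)\}_{i\in\N}\sqcup\{-(\la_i+\rho(i)+1)\}_{i\in\N}$; subtracting the integer $d$ from every element preserves $\Z$ and turns the two sides into $\{\zeta_i\}$ and $\{-\ov\zeta_i\}$, so $\{\zeta_i\}$ and $\{\ov\zeta_i\}$ form a dual pair. For $\g=\mf{d}$, applying \lemref{dual pair} to $\la$ gives $\Z=\{\la_i+\rho(i)\}_{i\in\N}\sqcup\{-(\la'_i+\rho(i)+1)\}_{i\in\N}$; negating this disjoint union (the dual-pair relation is symmetric under swapping its two sequences) and then translating by $-\tfrac{d}{2}$ yields $\Z-\tfrac{d}{2}=\{\zeta_i\}\sqcup\{-\ov\zeta_i\}$, and since $d\in\Z_+$ this lattice is $\Z$ or $\hf+\Z$ according to the parity of $d$, so again $\{\zeta_i\}$ and $\{\ov\zeta_i\}$ form a dual pair.

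The sign statements follow from the membership $(\la,d)\in\mc{D}(\g)$. For $\g=\mf{c}$, the condition $\la'_1\le d$ gives $\zeta_i=\la'_i-i-d\le\la'_1-i-d\le-i<0$ for all $i\in\N$, which is exactly the assertion for $\g\ne\mf{d}$. For $\g=\mf{d}$, the condition is $\la'_1+\la'_2\le d$. Since $\zeta_1=\la'_1-\tfrac{d}{2}$, the trichotomy $\zeta_1<0$, $\zeta_1=0$, $\zeta_1>0$ according to $\la'_1<\tfrac{d}{2}$, $\la'_1=\tfrac{d}{2}$, $\la'_1>\tfrac{d}{2}$ is immediate; and for $i\ge2$ the inequality $2\la'_2\le\la'_1+\la'_2\le d$ yields $\la'_2\le\tfrac{d}{2}$, whence $\zeta_i=\la'_i-i+1-\tfrac{d}{2}\le\la'_2-i+1-\tfrac{d}{2}\le1-i<0$.

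The main obstacle is entirely in the first step: correctly bookkeeping the three normalizations $\langle\vartheta_\mf{c},K\rangle=1$, $\langle\vartheta_\mf{d},K\rangle=\hf$ and the cross-ratio $\langle\vartheta_\g,K\rangle/\langle\vartheta_{\ov\g},K\rangle$, and keeping straight which of $\la$, $\la'$ is paired with which sequence after transposition — in particular, noticing that for $\g=\mf{c}$ one must invoke \lemref{dual pair} for $\la'$ rather than for $\la$. Once the closed forms for $\zeta_i$ and $\ov\zeta_i$ are in hand, the remaining two steps are short.
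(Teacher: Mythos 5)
Your proposal is correct and follows the same route as the paper: the paper's proof simply cites \lemref{dual pair} for the dual-pair claim and then does the elementary sign analysis from $\la'_1\le d$ (for $\mf{c}$) and $\la'_1+\la'_2\le d$ (for $\mf{d}$), exactly as you do. The only difference is that you write out the closed forms $\zeta_i$, $\ov\zeta_i$ and the translation/negation argument explicitly, all of which the paper leaves implicit; your bookkeeping of the normalizations and of which partition to feed into \lemref{dual pair} checks out.
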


\begin{proof} By \lemref{dual pair}, $\{\zeta_i\}_{i\in\N}$ and $\{\ov\zeta_i\}_{i\in\N}$ form a dual pair. It is clear that $\zeta_i < 0$ for $i\in\N$ and $\g=\mf{c}$. For $\g=\mf{d}$, we have $\la'_2\le \frac{d}{2}$ and hence $\zeta_i < 0$ for $i\ge 2$. Also, $\zeta_1=\la'_1-\frac{d}{2}<0$ (resp. $=0$ and $>0$) for $\la'_1< \frac{d}{2}$ (resp. $=\frac{d}{2}$ and $>\frac{d}{2}$).
\end{proof}

\begin{lem}\label{S-bcd}
For $\g=\mf{c,d}$ and $(\la, d)\in \mc{D}(\g)$, let $\{\zeta_i\}_{i\in\N}$ and $\{\ov\zeta_i\}_{i\in\N}$ be two sequences defined in \lemref{zeta-bcd}. Define $N(\la, d)=\{(i,j)\in \N\times \N\,\mid\,\ov\zeta_i+\ov\zeta_j=0,\,i, j\in\N\}$, ${J}=\{j\in\N\,\mid\,(j,k)\notin N(\la, d),\,\forall k\in \N\}$, $\mc{S}=\{\zeta_i\,\mid\,i\ge 1\}$ and $\ov{\mc{S}}=\{\ov\zeta_i\,\mid\,i\in{J}\,\}$.
\begin{itemize}
\item[i.] For $\g=\mf{c}$, we have $\ov{\mc{S}}=\mc{S}$ and $\ov\zeta_{d+1}=0$.
\item[ii.] For $\g=\mf{d}$, we have
\begin{eqnarray*}
& \ov{\mc{S}}=\mc{S} \text{ and } \zeta_i\not=0\not=\ov\zeta_i\,\text{for all }\,i,\quad &{\rm if} \,\, \text{$d$ is odd};\\
& \ov{\mc{S}}\cup\{0\}=\mc{S}\text{ and } \zeta_1=0,\quad &{\rm if} \,\,\text{$d$ is even and }\, \,\la'_1=\frac{d}{2};\\
& \ov{\mc{S}}=\mc{S}\text{ and } \ov\zeta_i=0\,\text{for some }\, i,\quad &{\rm if} \,\,\text{$d$ is even and }\, \,\la'_1\not=\frac{d}{2}.
\end{eqnarray*}
\end{itemize}
\end{lem}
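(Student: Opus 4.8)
The plan is to make everything explicit in terms of the coordinate sequences of \lemref{dual pair} and then read off all four assertions from a single combinatorial equivalence. First I would compute $\zeta_i$ and $\ov\zeta_i$ from the defining equations in \lemref{zeta-bcd}. Writing $a_i:=\la_i+\rho(i)$ and $b_i:=\la'_i+\rho(i)+1$ as in \lemref{dual pair}, a direct substitution of $\rho_\g,\rho_{\ov\g},\vartheta_\g$ and $\phi_\g$ yields the uniform shape
\begin{equation*}
\zeta_i=b_i-c,\qquad \ov\zeta_i=a_i+c,\qquad\text{with } c=d+1 \ (\g=\mf{c}),\quad c=\tfrac{d}{2}\ (\g=\mf{d}).
\end{equation*}
Thus $\{\zeta_i\}$ lies in $\Z$ for $\g=\mf{c}$, and in $\Z$ or $\hz$ according to the parity of $d$ for $\g=\mf{d}$; that $\{\zeta_i\}$ and $\{\ov\zeta_i\}$ form a dual pair is then the shifted form of \lemref{dual pair}, using that $\{b_i\}\sqcup\{-a_i\}$ exhausts $\Z$.

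The heart of the argument is the equivalence
\begin{equation*}
j\in J \iff \ov\zeta_j\in\mc{S}.
\end{equation*}
To prove it I would invert the dual-pair relation: writing $A$ for the ambient set ($\Z$ or $\hz$), the identity $A=\{\zeta_i\}\sqcup\{-\ov\zeta_i\}$ gives $\{\ov\zeta_k\}=A\setminus\{-\zeta_i\}$. Now $j\in J$ means $-\ov\zeta_j\notin\{\ov\zeta_k\}$, i.e.\ $-\ov\zeta_j\in\{-\zeta_i\}$, i.e.\ $\ov\zeta_j\in\mc{S}$. Consequently $\ov{\mc{S}}=\{\ov\zeta_k\}\cap\mc{S}\subseteq\mc{S}$ always, and the missing part is exactly $\mc{S}\setminus\ov{\mc{S}}=\{\zeta_i\mid -\zeta_i\in\mc{S}\}$. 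Since $\zeta_i+\zeta_l=0$ is equivalent to $b_i+b_l=2c$, the whole lemma reduces to counting index pairs $(i,l)$ with $b_i+b_l=2c$: one has $\ov{\mc{S}}=\mc{S}$ precisely when there are none, and each such collision with value $\zeta_i$ contributes the element $\zeta_i$ to $\mc{S}\setminus\ov{\mc{S}}$.

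It remains to carry out the count using the constraints defining $\mc{D}(\g)$, where I would use that $b_i$ is strictly decreasing with $b_1=\la'_1$ and $b_2=\la'_2-1$. For off-diagonal pairs $i<l$ one has $b_i+b_l\le b_1+b_2=\la'_1+\la'_2-1$, and for the diagonal $2b_i=2c$ forces $b_i=c$, which the bound $b_2<c$ excludes for $i\ge2$. For $\g=\mf{c}$, where $c=d+1$ and $\la'_1\le d$, every $\zeta_i<0$, so no collision occurs and $\ov{\mc{S}}=\mc{S}$; the identity $\ov\zeta_{d+1}=a_{d+1}+(d+1)=\la_{d+1}=0$ follows since $\la$ has at most $d$ parts. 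For $\g=\mf{d}$, where $c=\tfrac{d}{2}$ and $\la'_1+\la'_2\le d$, the off-diagonal bound gives $b_i+b_l\le d-1<2c$, so the only possible collision is the diagonal $b_1=\tfrac{d}{2}$, i.e.\ $\la'_1=\tfrac{d}{2}$. This produces the trichotomy: if $d$ is odd then $\tfrac{d}{2}\notin\Z$ while every $b_i\in\Z$, so there is no collision and, as $A=\hz$, no $\zeta_i$ or $\ov\zeta_i$ vanishes; if $d$ is even with $\la'_1=\tfrac{d}{2}$ the unique collision is $(1,1)$ with $\zeta_1=0$, giving $\ov{\mc{S}}\cup\{0\}=\mc{S}$; and if $d$ is even with $\la'_1\neq\tfrac{d}{2}$ there is no collision, so $\ov{\mc{S}}=\mc{S}$, while $\tfrac{d}{2}\notin\{b_i\}$ forces $-\tfrac{d}{2}\in\{a_i\}$ by \lemref{dual pair}, i.e.\ $\ov\zeta_i=0$ for some $i$.

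I expect the main obstacle to be less the inequalities than the bookkeeping behind the key equivalence: one must fix the $\zeta,\ov\zeta$ normalizations exactly and invert the dual-pair identity without sign or shift errors, since all four conclusions are extracted from that single relation. The delicate point in the count itself is isolating $b_1=\tfrac{d}{2}$ as the only collision that can occur, which is precisely what makes the parity of $d$ and the value of $\la'_1$ govern the three cases.
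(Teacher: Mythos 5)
Your proof is correct and takes essentially the same route as the paper's: both invert the dual-pair identity of \lemref{zeta-bcd} to get $\ov{\mc S}\subseteq\mc S$ with $\mc{S}\setminus\ov{\mc{S}}=\{\zeta_i\mid-\zeta_i\in\mc S\}$, and both use the bound $\zeta_i+\zeta_l\le\la'_1+\la'_2-d-1<0$ for $(i,l)\ne(1,1)$ to isolate $\zeta_1=0$ as the only possible collision. Your write-up is a bit more systematic (the explicit coordinates $\zeta_i=b_i-c$, $\ov\zeta_i=a_i+c$ and the clean equivalence $j\in J\iff\ov\zeta_j\in\mc S$) and it spells out the $\g=\mf c$ case and the identity $\ov\zeta_{d+1}=0$, which the paper dismisses as ``similar and easier.''
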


\begin{proof} We shall only prove the case $\g=\mf{d}$. The proof of the other cases are similar and easier. For $j\ge 2$, we have $\zeta_1+\zeta_j\le \la'_1+\la'_2-d-j+1\le -1$ and hence $\zeta_1\not=-\zeta_j$ for $j\ge 2$. Since $\{\zeta_i\}_{i\in\N}$ and $\{\ov\zeta_i\}_{i\in\N}$ form a dual pair, $\zeta_1\not=\pm\zeta_j$ for $j\ge 2$ and $\zeta_i$ are negative for $i\ge 2$, we have $\zeta_i\in \ov{\mc{S}}$ for $i\ge 2$, and $\zeta_1\in \ov{\mc{S}}$ for $\zeta_1\not=0$. This implies $\ov{\mc{S}}\supseteq\mc{S}\backslash\{0\}$. For $x\in \ov{\mc{S}}$, we have $-x\notin \ov{\mc{S}}$ and hence $-x\in -\mc{S}$. Therefore $\ov{\mc{S}}=\mc{S}\backslash\{0\}$. By \lemref{dual pair}, $\mc{S}$ (resp. $\ov{\mc{S}}$) contains $0$ if and only if $d$ is even and $\la'_1=\frac{d}{2}$ (resp. $\la'_1\not=\frac{d}{2}$). The proof is completed.
\end{proof}

Recall that $\phi_{\mf{a}}=-\sum_{i\le 0}\epsilon_i$.

\begin{lem}\label{A}
For $(\la, d)\in \mc{D}(\mf{a})$, let $\{\zeta_i\}_{i\in\Z}$ and $\{\ov\zeta_i\}_{i\in\Z}$ be two sequences determined by
\begin{align*}
\Lambda^\mf{a}(\la,d)+\rho_\mf{a}-d\phi_\mf{a}&=\sum_{i\in \Z}(\zeta_i-1)\epsilon_i +d\vartheta_\mf{a},\\
\SLa^\mf{a}(\lambda,d)+\rho_\mf{a}+d\phi_\mf{a}
&=\sum_{i\in\Z}\ov\zeta_i\epsilon_i -d\vartheta_\mf{a}.
\end{align*}
Define $N(\la,d)=\{(i,j)\in I_\mf{a}\times I_\mf{a}\,\mid\,\ov\zeta_i=\ov\zeta_j,\,i\le 0<j\}$,
${J}_+=\{j\in\N\,\mid\,(i,j)\notin N(\la, d),\forall i\le 0\}$, ${J}_-=\{i\in\Z\,\mid\,(i,j)\notin N(\la, d),\forall j\in \N\}$, $\mc{S}_+=\{\zeta_i\,\mid\,i\ge 1\}$, $\mc{S}_-=\{\zeta_i\,\mid\,i\le 0\}$, $\ov{\mc{S}}_+=\{\ov\zeta_i\,\mid\,i\in {J}_+\,\}$ and $\ov{\mc{S}}_-=\{\ov\zeta_i\,\mid\,i\in {J}_-\,\}$.
Then we have $\ov{\mc{S}}_+=-\mc{S}_-$ and $\ov{\mc{S}}_-=-\mc{S}_+$.
\end{lem}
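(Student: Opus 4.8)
The plan is to reduce both asserted identities to a single set equality $\mc{T}_+\cup\mc{T}_-=\Z$, where $\mc{T}_+:=\{\ov\zeta_j\mid j\ge 1\}$ and $\mc{T}_-:=\{\ov\zeta_i\mid i\le 0\}$, and then to extract that equality from the numerical constraint $(\la^-)'_1+(\la^+)'_1\le d$ built into $\mc{D}(\mf a)$. First I would read off the four sequences by comparing coefficients of $\epsilon_i$ in the defining relations, using the formulas for $\Lambda^\mf{a}$, $\SLa^\mf{a}$, $\rho_\mf a$, $\phi_\mf a$ and $\vartheta_\mf a$. This gives, for $j\ge 1$, $\zeta_j=(\la^+)'_j-j+1$ and $\ov\zeta_j=\la^+_j-j$, and, for $k\ge 0$, $\zeta_{-k}=d+k+1-(\la^-)'_{k+1}$ and $\ov\zeta_{-k}=k-d-\la^-_{k+1}$. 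Each of the sequences $\{\ov\zeta_j\}_{j\ge1}$ and $\{\ov\zeta_{-k}\}_{k\ge0}$ is strictly monotone, so the $\ov\zeta$-values on each side of the divide are pairwise distinct. Since $N(\la,d)$ only pairs a non-positive index with a positive one, the index sets $J_+$ and $J_-$ single out, on the positive and the non-positive side respectively, precisely those indices whose $\ov\zeta$-value is not shared across the divide; by distinctness this yields directly $\ov{\mc S}_+=\mc{T}_+\setminus\mc{T}_-$ and $\ov{\mc S}_-=\mc{T}_-\setminus\mc{T}_+$.

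Next I would invoke \lemref{dual pair} twice. Applied to $\la^+$, it says that $\{\la^+_j-j\}_{j\ge1}=\mc{T}_+$ and $\{(\la^+)'_j-j+1\}_{j\ge1}=\mc S_+$ form a dual pair, i.e. $\Z=\mc{T}_+\sqcup(-\mc S_+)$. Applied to $\la^-$ and then transported by the bijection $x\mapsto -x-(d+1)$ of $\Z$, it yields $\Z=\mc{T}_-\sqcup(-\mc S_-)$; one checks that this bijection carries $\{\la^-_m-m\}_{m\ge1}$ onto $\mc{T}_-$ and $\{-((\la^-)'_m-m+1)\}_{m\ge1}$ onto $-\mc S_-$. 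Hence $-\mc S_+=\Z\setminus\mc{T}_+$ and $-\mc S_-=\Z\setminus\mc{T}_-$. Combining this with the previous paragraph gives $\ov{\mc S}_+=\mc{T}_+\cap(-\mc S_-)$ and $\ov{\mc S}_-=\mc{T}_-\cap(-\mc S_+)$, so the two desired equalities $\ov{\mc S}_+=-\mc S_-$ and $\ov{\mc S}_-=-\mc S_+$ hold as soon as $-\mc S_-\subseteq\mc{T}_+$ and $-\mc S_+\subseteq\mc{T}_-$; both inclusions are plainly equivalent to the single statement $\mc{T}_+\cup\mc{T}_-=\Z$.

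Finally I would prove $\mc{T}_+\cup\mc{T}_-=\Z$, which is the only point where the hypothesis $(\la,d)\in\mc{D}(\mf a)$ is used, by comparing the tails of the two sequences. For $j>(\la^+)'_1$ we have $\la^+_j=0$, so $\ov\zeta_j=-j$ and $\mc{T}_+\supseteq\{n\in\Z\mid n\le -(\la^+)'_1-1\}$; for $k\ge(\la^-)'_1$ we have $\la^-_{k+1}=0$, so $\ov\zeta_{-k}=k-d$ and $\mc{T}_-\supseteq\{n\in\Z\mid n\ge (\la^-)'_1-d\}$. Because $(\la^-)'_1+(\la^+)'_1\le d$ forces $(\la^-)'_1-d\le -(\la^+)'_1$, these two tails already overlap enough to exhaust $\Z$, giving $\mc{T}_+\cup\mc{T}_-=\Z$. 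I expect the main obstacle to be bookkeeping rather than any deep step: one must pin down correctly the shifts by $d$ coming from $\vartheta_\mf a$ and $\phi_\mf a$ so that the dual-pair partition attached to $\la^-$ lands in the right position, and recognize that $N(\la,d)$ measures exactly the overlap $\mc{T}_+\cap\mc{T}_-$, which the level constraint forces to be as large as possible.
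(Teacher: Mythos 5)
Your proof is correct and follows essentially the same route as the paper's: both rest on the two dual-pair partitions $\Z=\mc{B}_\pm\sqcup(-\mc{S}_\pm)$ (your $\mc{T}_\pm$ are the paper's $\mc{B}_\pm=\{\ov\zeta_i\}$) obtained from \lemref{dual pair}, together with the identification $\ov{\mc{S}}_\pm=\mc{B}_\pm\setminus\mc{B}_\mp$. The only difference is cosmetic and occurs at the last step: the paper closes the argument by noting that the full sequence $\{\zeta_i\}_{i\in\Z}$ is strictly monotone, so that $-\mc{S}_+$ and $-\mc{S}_-$ are disjoint, whereas you verify the equivalent fact $\mc{B}_+\cup\mc{B}_-=\Z$ directly from the tails of the $\ov\zeta$-sequences; both hinge on the same inequality $(\la^-)'_1+(\la^+)'_1\le d$.
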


\begin{proof} Let $\mc{B}_+=\{\ov\zeta_i\,\mid\,i\in\N\}$ and $\mc{B}_-=\{\ov\zeta_i\,\mid\,i\le 0\}$. By \lemref{dual pair}, we have
 \[(-\mc{S}_+)\sqcup \mc{B}_+=\Z\quad {\rm and} \quad(-\mc{S}_-)\sqcup \mc{B}_-=\Z.\]
For $x\in\ov{\mc{S}}_+$, we have $x\notin\mc{B}_-$ by the definition of $\ov{\mc{S}}_+$ and hence $x\in-\mc{S}_-$. Therefore $\ov{\mc{S}}_+\subseteq-\mc{S}_-$.
Now assume $x\in-\mc{S}_-$. We have $x\notin\mc{B}_-$. Since $\{\zeta_i\}_{i\in\Z}$ is strictly increasing, we have $x\notin-\mc{S}_+$ and hence $x\in\mc{B}_+$. Thus $x\in\mc{B}_+\backslash\mc{B}_-=\ov{\mc{S}}_+$ and therefore $-\mc{S}_-\subseteq\ov{\mc{S}}_+$. Similarly, we have $-\mc{S}_+=\ov{\mc{S}}_-$. The proof is completed.
\end{proof}

We shall use the notations defined in \lemref{S-bcd} and \lemref{A} for the rest of the paper.
By \eqnref{action-a} and \lemref{A}, we have (for $(\la,d)\in \mc{D}(\mf{a})$, $\sigma\in W_{\mf{a}}$)
\begin{equation}\label{sigma-a}
\sigma^{-1}(\Lambda^\mf{a}(\la,d)+\rho_\mf{a})
=\sum_{i\in\Z}(\zeta_i-1)\epsilon_{\sigma^{-1}(i)}+d\phi_\mf{a}+d\vartheta_\mf{a}
=\sum_{i\in\Z}\zeta_{\sigma(i)}\epsilon_i-\sum_{i\in\Z}\epsilon_i+d\phi_\mf{a}+d\vartheta_\mf{a}.
\end{equation}
By \lemref{S-bcd} and \eqnref{action-bcd}, we have (for $(\la,d)\in \mc{D}(\g)$, $\sigma\in W_{\g}$ and $\g=\mf{c,d}$)
\begin{equation}\label{sigma-bcd}
\sigma^{-1}(\Lambda^\g(\la,d)+\rho_\g)=\sum_{i\in \N} \zeta_{\sigma(i)}\epsilon_{i}+d\langle\vartheta_\g,K\rangle\phi_\g +d\vartheta_\g+d\vartheta_\g.
\end{equation}

For $\eta$ belonging to the subspace of $\mathfrak{h}_{\ov\g}^\ast$ spanned by  $\epsilon_j$s and $\vartheta_{\ov\g}$, let
$[\eta]^+$ denote the unique $\Delta^+_{\ov\g,c}$-dominant element in
${W}_{\ov\g,0}$-orbit of $\eta\in \h_{\ov\g}^*$. The following two propositions are  important for proving the main theorem in the next section.

\begin{prop}\label{hw-bcd} Let $\{j_i\}_{i\in\N}$ be the strictly increasing sequence with $J=\{j_i\mid\,i\in\N\}$. For $(\la, d)\in\mc{D}(\g)$ with $\g=\mf{c,d}$ and a partition $\mu$ with $\Lambda^\g(\mu,d)=\sigma^{-1}\circ \Lambda^\g(\la,d)$ for some $\sigma\in
{W}^0_{\g,k}$,  we have \begin{align*}
&\SLa^\g(\mu,d)
+\rho_{\ov\g}+d\langle\vartheta_\g,K\rangle\phi_{\ov\g}\\
=&\begin{cases}
\big[\sum_{i\in \N\backslash J}\ov\zeta_i\epsilon_i +\sum_{i\in \N}\ov\zeta_{j_{\sigma(i)}}\epsilon_{j_i} -\frac{d\langle\vartheta_\g,K\rangle}{\langle\vartheta_{\ov{\g}},K\rangle}\vartheta_{\ov{\g}}\big]^+,
\quad \text{if } 0\notin\mc{S};\\
\big[\sum_{i\in \N\backslash J}\ov\zeta_i\epsilon_i +\sum_{i\in \N}\ov\zeta_{j_{\sigma^0(i)}}\epsilon_{j_i} -\frac{d\langle\vartheta_\g,K\rangle}{\langle\vartheta_{\ov{\g}},K\rangle}\vartheta_{\ov{\g}}\big]^+,
\quad \text{if } 0\in\mc{S}.
\end{cases}
\end{align*}
Here $\sigma^0$ appears only in the case $\g=\mf{d}$  and it is determined by
$\ov{\sigma^0}=\sigma$ (see \lemref{sigma-bar} and \lemref{c-d}).
\end{prop}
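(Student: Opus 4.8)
The plan is to reduce the stated identity to an equality of multisets of $\epsilon_i$-coefficients. Because $[\,\cdot\,]^+$ returns the $\Delta^+_{\ov\g,c}$-dominant representative of a $W_{\ov\g,0}$-orbit, and the generators $\sigma_j$ ($j\neq 0$) of $W_{\ov\g,0}$ merely permute the $\epsilon_i$-coefficients while fixing $\vartheta_{\ov\g}$, the identity will follow once I show that the right-hand side and $\SLa^\g(\mu,d)+\rho_{\ov\g}+d\langle\vartheta_\g,K\rangle\phi_{\ov\g}=\sum_i\ov\zeta^\mu_i\epsilon_i-\frac{d\langle\vartheta_\g,K\rangle}{\langle\vartheta_{\ov\g},K\rangle}\vartheta_{\ov\g}$ carry the same multiset of $\epsilon_i$-coefficients; the latter is already dominant, since by \lemref{zeta-bcd} applied to $\mu$ its coefficient sequence $\{\ov\zeta^\mu_i\}$ is strictly decreasing, and since $\sigma$ is finitary the two sequences differ by an element of $W_{\ov\g,0}$.

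First I would record the two sequences. Applying \eqnref{sigma-bcd} to the hypothesis $\Lambda^\g(\mu,d)=\sigma^{-1}\circ\Lambda^\g(\la,d)$ shows that the $\zeta$-sequence attached to $\mu$ by \lemref{zeta-bcd} is $\zeta^\mu_i=\zeta_{\sigma(i)}$, with the index convention $\zeta_{-k}=-\zeta_k$; since $\sigma\in W^0_\g$ this is strictly decreasing. By the dual-pair half of \lemref{zeta-bcd} applied to $\mu$ one has $\{-\ov\zeta^\mu_i\}=\Z\setminus\{\zeta^\mu_i\}$ (resp. the half-integer analogue). Writing $R:=\{\ov\zeta_i\mid i\in\N\setminus J\}$ for the paired part of $\ov\zeta$, the dual pair for $\la$ together with \lemref{S-bcd} gives $R=\Z\setminus(\mc{S}\cup(-\mc{S}))$; in particular $R$ is symmetric, and because $\sigma$ is a signed permutation of $\Z^*$ commuting with negation, $R$ is unchanged when $\mc{S}$ is replaced by the value set of $\{\zeta^\mu_i\}$. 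The constraint $(\la,d)\in\mc{D}(\g)$ guarantees that $k\mapsto\zeta_k$ is injective on $\Z^*$, except for the single collision $\zeta_{\pm1}=0$ in the exceptional case below.

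In the generic case $0\notin\mc{S}$ (all of $\g=\mf{c}$, and $\g=\mf{d}$ with $d$ odd or $\la'_1\neq\frac d2$), \lemref{S-bcd} gives $\ov{\mc S}=\mc{S}$; comparing the two strictly decreasing enumerations $\ov\zeta_{j_1}>\ov\zeta_{j_2}>\cdots$ and $\zeta_1>\zeta_2>\cdots$ of this common set yields $\ov\zeta_{j_k}=\zeta_k$ for all $k$. Extending by $j_{-k}:=-j_k$, I get $\ov\zeta_{j_{\sigma(i)}}=\zeta_{\sigma(i)}=\zeta^\mu_i$, so the right-hand multiset is $R\cup\{\zeta^\mu_i\}$. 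On the other side, feeding $\{\zeta^\mu_i\}=\{\zeta_k\mid k\in\sigma(\N)\}$ into $\{-\ov\zeta^\mu_i\}=\Z\setminus\{\zeta^\mu_i\}$ and negating shows $\{\ov\zeta^\mu_i\}=R\cup\{\zeta^\mu_i\}$ as well, by selecting from each $\pm$-pair of $\mc{S}\cup(-\mc{S})$ the representative not chosen by $\sigma(\N)$. The two multisets agree, finishing this case.

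The delicate case, which I expect to be the main obstacle, is $\g=\mf{d}$ with $0\in\mc{S}$ (so $d$ even, $\la'_1=\frac d2$ and $\zeta_1=0$). Here \lemref{S-bcd} gives $\ov{\mc S}=\mc{S}\setminus\{0\}$, so the enumeration shifts to $\ov\zeta_{j_k}=\zeta_{k+1}$; this shift, together with the fact that the fixed point $0$ of negation breaks the clean ``one representative per $\pm$-pair'' bookkeeping, is precisely what forces $\sigma$ to be replaced by $\sigma^0$. Setting $\xi_k:=\zeta_{k+1}$, so $\ov\zeta_{j_k}=\xi_k$, and using $\ov{\sigma^0}=\sigma$, \lemref{c-d} gives $\{\xi_{\sigma^0(i)}\mid i\in\N\}\cup\{0\}=\{\zeta_{\sigma(i)}\mid i\in\N\}=\{\zeta^\mu_i\}$, whence the right-hand free part $\{\ov\zeta_{j_{\sigma^0(i)}}\}=\{\xi_{\sigma^0(i)}\}=\{\zeta^\mu_i\}\setminus\{0\}$. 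Since $\zeta^\mu$ then contains $0$ exactly once, the dual-pair computation as above yields $\{\ov\zeta^\mu_i\}=R\cup(\{\zeta^\mu_i\}\setminus\{0\})$, matching the right-hand multiset $R\cup(\{\zeta^\mu_i\}\setminus\{0\})$ and completing the proof. The care needed throughout is purely combinatorial: tracking signs under the signed-permutation action, the $\mf{c}$/$\mf{d}$ distinction between signed and even-signed permutations, and the bookkeeping of the single exceptional zero.
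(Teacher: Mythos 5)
Your proposal is correct and follows essentially the same route as the paper: both arguments reduce the identity to a disjoint-union/multiset computation in $\Z$ (or $\hf+\Z$) via the dual-pair property of \lemref{dual pair}/\lemref{zeta-bcd}, the set identity $\ov{\mc S}=\mc S$ (or $\mc S\setminus\{0\}$) of \lemref{S-bcd}, the signed-permutation description of $\sigma$, and \lemref{c-d} for the exceptional $\mf d$-case, concluding because the resulting coefficient sequence for $\mu$ is strictly decreasing and hence dominant. Your write-up is somewhat more explicit about the pointwise identification $\ov\zeta_{j_k}=\zeta_k$ and the complement bookkeeping, but the substance is identical to the paper's proof.
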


\begin{proof} In the proof, union means disjoint union. Let $\{\xi_i\}_{i\in\N}$ and $\{\ov\xi_i\}_{i\in\N}$ be two sequences determined by
\begin{align*}
\Lambda^\g(\mu,d)+\rho_\g-d\langle\vartheta_\g,K\rangle\phi_\g&=\sum_{i\in I_\g}\xi_i\epsilon_i +d\vartheta_\g,\\ \SLa^\g(\mu,d)+\rho_{\ov\g}+d\langle\vartheta_\g,K\rangle\phi_{\ov\g}
&=\sum_{i\in I_\g}\ov\xi_i\epsilon_i -\frac{d\langle\vartheta_\g,K\rangle}{\langle\vartheta_{\ov{\g}},K\rangle}\vartheta_{\ov{\g}}.
\end{align*}
Assume $0\notin\mc{S}$. We have $\{\ov\zeta_{j_i}\}_{i\in\N}=\{\zeta_i\}_{i\in\N}$ by \lemref{S-bcd}. By  \lemref{zeta-bcd}, \lemref{S-bcd}, and the fact that $\sigma$ acts on $\Z^*$ as a signed permutation, we have
\[\{-\zeta_{\sigma(i)}\,\mid i\in\N\}\sqcup\{\ov\zeta_{j_{\sigma(i)}}\,\mid i\in\N\} \sqcup\{\ov\zeta_i\,\mid i\in\N\backslash J\}=\Z\,\,({\rm or}\,\,\hf+\Z).
\]
Since $\{\xi_i\}_{i\in\N}$ and $\{\ov\xi_i\}_{i\in\N}$ form a dual pair, and $\{\xi_i\mid\,i\in\N\}=\{\zeta_{\sigma(i)}\,\mid i\in\N\}$ by \eqnref{sigma-bcd}, we have $\{\ov\xi_i\,\mid i\in\N\}=\{\ov\zeta_{j_{\sigma(i)}}\,\mid i\in\N\} \sqcup\{\ov\zeta_i\,\mid i\in\N\backslash J\}$. Therefore the proposition holds for this case since $\{\ov\xi_i\}_{i\in\N}$ is a decreasing sequence.

The case of $0\in\mc{S}$ only occurs when $\g=\mf{d}$ with $\zeta_1=0$. We have $\{\ov\zeta_{j_i}\mid i\in\N\}=\{\zeta_i\mid i\in\N\}\backslash\{0\}$. Since $\sigma$ acts on $\Z^*$ as a signed permutation, by \lemref{c-d}, we have
\[\{-\zeta_{\sigma(i)}\,\mid i\in\N\}\sqcup\{\ov\zeta_{j_{\sigma^0(i)}}\,\mid i\in\N\} \sqcup\{\ov\zeta_i\,\mid i\in\N\backslash J\}=\Z.
\]
Now the proposition also follows in this case using the arguments above.
\end{proof}

\begin{prop}\label{hw-a} Let $\{j_i\}_{i\in\Z}$ be the strictly decreasing sequence such that $J_+=\{j_i\mid\,i\le 0\}$ and $J_-=\{k_i\mid\,i\in\N\}$, and let $J=J_-\sqcup J_+$. For $(\la, d)\in\mc{D}(\mf{a})$ and a partition $\mu$ with $\Lambda^\mf{a}(\mu,d)=\sigma^{-1}\circ \Lambda^\mf{a}(\la,d)$ for some $\sigma\in
{W}^0_{\mf{a},k}$, we have \[\SLa^\mf{a}(\mu,d)
+\rho_{{\mf{a}}}+d\phi_{{\mf{a}}}=
\big[\sum_{i\in \Z\backslash J}\ov\zeta_i\epsilon_i +\sum_{i\in \Z}\ov\zeta_{j_{\sigma(i)}}\epsilon_{j_i} -d\vartheta_{{\mf{a}}}\big]^+.
\]
\end{prop}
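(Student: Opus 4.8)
The plan is to mirror the proof of \propref{hw-bcd} almost verbatim, replacing the dual-pair analysis of a single sequence by the coupled analysis of the two half-sequences indexed by $i\le 0$ and $i\ge 1$ that is characteristic of type $\mf{a}$. First I would introduce, exactly as in the previous proof, two auxiliary sequences $\{\xi_i\}_{i\in\Z}$ and $\{\ov\xi_i\}_{i\in\Z}$ determined by
\begin{align*}
\Lambda^\mf{a}(\mu,d)+\rho_\mf{a}-d\phi_\mf{a}&=\sum_{i\in\Z}(\xi_i-1)\epsilon_i+d\vartheta_\mf{a},\\
\SLa^\mf{a}(\mu,d)+\rho_\mf{a}+d\phi_\mf{a}&=\sum_{i\in\Z}\ov\xi_i\epsilon_i-d\vartheta_\mf{a},
\end{align*}
so that by \lemref{dual pair} the sequences $\{\xi_i\}_{i\in\Z}$ and $\{\ov\xi_i\}_{i\in\Z}$ form a dual pair. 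The point of the hypothesis $\Lambda^\mf{a}(\mu,d)=\sigma^{-1}\circ\Lambda^\mf{a}(\la,d)$ is that it pins down the $\xi$-sequence: applying \eqnref{sigma-a} gives $\{\xi_i\mid i\in\Z\}=\{\zeta_{\sigma(i)}\mid i\in\Z\}$, so the left-hand sequence of the $\mu$ dual pair is just a reindexing of the $\la$ data by $\sigma$.

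Next I would run the set-theoretic bookkeeping that converts this into a statement about $\{\ov\xi_i\}$. Since $\{\xi_i\}$ and $\{\ov\xi_i\}$ form a dual pair, $\Z$ (or $\hf+\Z$) is the disjoint union of $-\{\xi_i\mid i\in\Z\}=-\{\zeta_{\sigma(i)}\mid i\in\Z\}$ and $\{\ov\xi_i\mid i\in\Z\}$. The task is to identify $\{\ov\xi_i\mid i\in\Z\}$ with the set
\[
\{\ov\zeta_{j_{\sigma(i)}}\mid i\in\Z\}\sqcup\{\ov\zeta_i\mid i\in\Z\backslash J\}.
\]
Here \lemref{A} supplies the crucial relations $\ov{\mc S}_+=-\mc S_-$ and $\ov{\mc S}_-=-\mc S_+$, which play the role that $\ov{\mc S}=\mc S$ played in the $\mf{c,d}$ case; they guarantee that the ``resonance'' indices collected in $N(\la,d)$ (where $\ov\zeta_i=\ov\zeta_j$ for $i\le 0<j$) are precisely the ones that get cancelled off, leaving the complementary index set $J=J_-\sqcup J_+$ to carry the surviving entries. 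Using that $\sigma$ acts as a permutation of $\Z$ preserving the splitting into the two half-lines (together with the coset description \eqnref{coset} for $\sigma\in W_\mf{a}^0$), I would check that the disjoint-union identity above indeed exhausts $\Z$, which forces $\{\ov\xi_i\mid i\in\Z\}$ to be exactly that union.

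Finally, since $\{\ov\xi_i\}_{i\in\Z}$ is the decreasing sequence attached to $\SLa^\mf{a}(\mu,d)$, the $\Delta^+_{\mf a,c}$-dominant representative $[\,\cdot\,]^+$ of the displayed expression recovers $\SLa^\mf{a}(\mu,d)+\rho_\mf{a}+d\phi_\mf{a}$, giving the claimed formula. I expect the main obstacle to be the index bookkeeping rather than any conceptual difficulty: one must track carefully how $\sigma$ permutes the indices across the dividing line $i\le 0<j$ and confirm that the terms $\ov\zeta_{j_{\sigma(i)}}$ are correctly distributed between the two halves $J_+$ and $J_-$ (encoded by the two strictly decreasing enumerations $\{j_i\}_{i\le 0}$ and $\{k_i\}_{i\in\N}$), and that no entry is double-counted or lost when passing from the dual pair for $\la$ to the dual pair for $\mu$. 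The sign-reversal asymmetry in \lemref{A} ($\ov{\mc S}_+=-\mc S_-$ versus $\ov{\mc S}_-=-\mc S_+$) is exactly what makes this cross-over consistent, so verifying that these relations feed correctly into the permutation argument is the delicate point.
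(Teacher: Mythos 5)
Your proposal follows the paper's proof of \propref{hw-a} essentially verbatim: the same auxiliary sequences $\{\xi_i\}$, $\{\ov\xi_i\}$, the identification $\{\xi_i\}=\{\zeta_{\sigma(i)}\}$ via \eqnref{sigma-a}, the cross-over relations $\ov{\mc S}_+=-\mc S_-$ and $\ov{\mc S}_-=-\mc S_+$ from \lemref{A} as the key input, the fact that $\sigma$ permutes $\Z$, and the final appeal to monotonicity and $[\cdot]^+$. One step is stated imprecisely, and it is exactly the point you defer as ``bookkeeping'': there is no dual pair between the full $\Z$-indexed sequences $\{\xi_i\}_{i\in\Z}$ and $\{\ov\xi_i\}_{i\in\Z}$ --- \lemref{dual pair} applies separately to the half-sequences indexed by $i\in\N$ and by $i\le 0$ (governed by $\mu^+$ and $\mu^-$ respectively), and since $[\cdot]^+$ sorts the two halves independently (as $\{\ov\xi_i\}_{i\in\N}$ is decreasing while $\{\ov\xi_{-i}\}_{i\in\Z_+}$ is increasing), identifying the single set $\{\ov\xi_i\mid i\in\Z\}$ would not suffice. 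The paper resolves this by first combining the two half-decompositions of $\Z$ via $\ov{\mc S}_+=-\mc S_-$ to get $\Z=\{-\zeta_{\sigma(i)}\mid i\in\Z\}\sqcup\{\ov\zeta_i\mid i\in\N\backslash J_+\}$, then splitting $\{-\zeta_{\sigma(i)}\}$ back into $i\in\N$ and $i\le 0$ and using $\zeta_{\sigma(i)}=-\ov\zeta_{j_{\sigma(i)}}$ to read off $\{\ov\xi_i\mid i\in\N\}=\{\ov\zeta_{j_{\sigma(i)}}\mid i\le 0\}\sqcup\{\ov\zeta_i\mid i\in\N\backslash J_+\}$ from the dual pair on the positive half, and symmetrically for $\{\ov\xi_i\mid i\le 0\}$; this is precisely the cross-over distribution you anticipated, so your plan is sound but would need this step written out to be complete.
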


\begin{proof} In the proof, union means disjoint union. Let $\{\xi_i\}_{i\in\Z}$ and $\{\ov\xi_i\}_{i\in\Z}$ be two sequences determined by
\begin{align*}
\Lambda^\mf{a}(\mu,d)+\rho_\mf{a}+\sum_{i\in \Z}\epsilon_i -d\phi_\mf{a}&=\sum_{i\in \Z}\xi_i\epsilon_i +d\vartheta_\mf{a},\\ \SLa^\mf{a}(\mu,d)+\rho_{{\mf{a}}}+d\phi_{{\mf{a}}}
&=\sum_{i\in \Z}\ov\xi_i\epsilon_i -d\vartheta_{{\mf{a}}}.
\end{align*}
By \lemref{A}, we have
\begin{equation*}\Z=(-\mc{S}_+)\sqcup(\ov{\mc{S}}_+)\sqcup\{\ov\zeta_i\,\mid i\in\N\backslash J_+\}=(-\mc{S}_+)\sqcup(-\mc{S}_-)\sqcup\{\ov\zeta_i\,\mid i\in\N\backslash J_+\}.
\end{equation*}
Therefore we have $\Z=\{-\zeta_{\sigma(i)}\,\mid i\in\Z\}\sqcup\{\ov\zeta_i\,\mid i\in\N\backslash J_+\}$ because $\sigma$ acts as a permutation on $\Z$. Since $\xi_i=\zeta_{\sigma(i)}$ for $i\in \Z$ by \eqnref{sigma-a} and $\zeta_{\sigma(i)}=-\ov\zeta_{j_{\sigma(i)}}$ for $i\in \Z$ by \lemref{A}, we have
\begin{align*}\Z&=\{-\zeta_{\sigma(i)}\,\mid  i\in\N\}\sqcup \{-\zeta_{\sigma(i)}\,\mid i\le 0\}\sqcup\{\ov\zeta_i\,\mid i\in\N\backslash J_+\}\\
&=\{-\xi_{i}\,\mid i\in\N\}\sqcup \{\ov\zeta_{j_{\sigma(i)}}\,\mid i\le 0\}\sqcup\{\ov\zeta_i\,\mid i\in\N\backslash J_+\}.
\end{align*}
Since $\{\xi_i\}_{i\in\N}$ and $\{\ov\xi_i\}_{i\in\N}$ form a dual pair, we have $\{\ov\xi_i\,\mid i\in\N\}=\{\ov\zeta_{j_{\sigma(i)}}\,\mid i\le 0\} \sqcup\{\ov\zeta_i\,\mid i\in\N\backslash J_+\}$. Similarly, we have $\{\ov\xi_i\,\mid i\le 0\}=\{\ov\zeta_{j_{\sigma(i)}}\,\mid i\in\N\} \sqcup\{\ov\zeta_i\,\mid i\in(-\Z_+)\backslash J_-\}$. Therefore the proposition holds since $\{\ov\xi_i\}_{i\in\N}$ is a decreasing sequence and $\{\ov\xi_{-i}\}_{i\in\Z_+}$ is an increasing sequence.
\end{proof}

\section{$\mathfrak u_{\ov\g}^-$-homology formulas for $\ov{\mf g}_\infty$-modules}\label{sec:main}
In this section we give a combinatorial proof of Enright's $\mathfrak u_{\ov\g}^-$-homology formula \cite{E} for the unitarizable highest weight $\ov{\mf g}_\infty$-modules with highest weight $\SLa^\g(\la,d)$.

For a module $V$ over Lie algebra $\mc{G}$, let ${\rm H}_k(\mc{G};V)$ denote $k$-th homology group of $\mc{G}$ with coefficients in $V$.
It is well known that the homology groups ${\rm H}_k(\mathfrak{u}_{\ov\g}^-;V)$ are $\mf{l}_{\ov\g}$-modules. The $\mathfrak u_{\ov\g}^-$-homology of unitarizable highest weight modules are described by the following theorem which was obtained in \cite[Theorem 7.2]{CK} for $\ov{\g}_\infty=\mf{a}_\infty$ and in \cite[Theorem 6.5]{CKW} for $\ov{\g}_\infty=\mf{c}_\infty, \mf{d}_\infty$. The following theorem holds for more general situation by using the correspondence of homology group in the sense of super duality \cite[Theorem 4.10]{CLW} together with Kostant's formulas for integrable $\g_\infty$-modules (cf. \cite{J, Ko,V, CK}).

\begin{thm}\label{homology}
For $(\la,d)\in\mc{D}(\g)$ with $\g=\mf{c,d}$ (resp. $\g=\mf{a}$), we have, as
$\mf{l}_{\ov\g}$-modules,
\begin{equation*}
{\rm H}_k\big(\mf {u}_{\ov\g}^-;L(\ov\g_\infty,\SLa^\g(\la,d))\big)
\cong\bigoplus_{\mu}L(\mf{l}_{\ov\g}\,,\ov{\Lambda}^\g(\mu,d)),
\end{equation*}
where $\mu$ is summed over all partitions (resp. pairs of partitions)
such that $\Lambda^\g(\mu,d)=w^{-1}\circ \Lambda^\g(\la,d)$ for some $w\in
{W}^0_{\g,k}$.
\end{thm}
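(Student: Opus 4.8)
The plan is to prove this theorem not by a direct homological computation but by transporting the known Kostant-type $\mathfrak{u}$-homology formula for the \emph{integrable} modules $L(\g_\infty,\Lambda^\g(\la,d))$ across the super duality equivalence, and then rewriting the resulting highest weights using the combinatorial dictionary assembled in \secref{hw-data}. First I would invoke the Kostant formula for the integrable $\g_\infty$-module: as $\mf{l}_\g$-modules, ${\rm H}_k(\mf{u}_\g^-;L(\g_\infty,\Lambda^\g(\la,d)))\cong\bigoplus_\mu L(\mf{l}_\g,\Lambda^\g(\mu,d))$, summed over $\mu$ with $\Lambda^\g(\mu,d)=w^{-1}\circ\Lambda^\g(\la,d)$ for $w\in W^0_{\g,k}$. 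Since Kostant's formulas are stated for the integrable setting (cf.~\cite{J,Ko,V,CK}), and the super duality functor of \cite[Theorem 4.10]{CLW} matches $\mf{u}_\g^-$-homology of the integrable module with $\mf{u}_{\ov\g}^-$-homology of the unitarizable module $L(\ov\g_\infty,\SLa^\g(\la,d))$ while intertwining the two Levi subalgebras $\mf{l}_\g$ and $\mf{l}_{\ov\g}$, the homology on the unitarizable side is controlled by exactly the same index set of Weyl group elements $w\in W^0_{\g,k}$.

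The essential content, therefore, is to identify the $\mf{l}_{\ov\g}$-module appearing for each such $w$. Under super duality the summand $L(\mf{l}_\g,\Lambda^\g(\mu,d))$ corresponds to an irreducible $\mf{l}_{\ov\g}$-module, and I must show that this module is precisely $L(\mf{l}_{\ov\g},\ov\Lambda^\g(\mu,d))$, i.e.\ that its highest weight is the $\Delta^+_{\ov\g,c}$-dominant representative of the weight produced by the action of $\sigma=w$ on the numerical data. This is exactly where \propref{hw-bcd} (for $\g=\mf{c,d}$) and \propref{hw-a} (for $\g=\mf{a}$) enter: they compute $\SLa^\g(\mu,d)+\rho_{\ov\g}+d\langle\vartheta_\g,K\rangle\phi_{\ov\g}$ as the dominant element $[\,\cdot\,]^+$ of an explicit rearrangement of the dual-pair sequences $\{\zeta_i\},\{\ov\zeta_i\}$ governed by $\sigma$ and (in the $\mf{d}$ case with $0\in\mc{S}$) by the bijection $\sigma\mapsto\ov\sigma$ of \lemref{sigma-bar} and \lemref{c-d}. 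Feeding the relation $\Lambda^\g(\mu,d)=\sigma^{-1}\circ\Lambda^\g(\la,d)$ into these propositions yields the highest weight $\ov\Lambda^\g(\mu,d)$ on the nose, so the summand is correctly identified.

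Concretely, I would proceed as follows. Fix $k$ and $w=\sigma\in W^0_{\g,k}$, and let $\mu$ be the partition (or pair of partitions) determined by $\Lambda^\g(\mu,d)=\sigma^{-1}\circ\Lambda^\g(\la,d)$; existence and the dominance of the right-hand side are guaranteed because $\sigma$ is a minimal-length coset representative, so $\sigma^{-1}\circ\Lambda^\g(\la,d)$ is $\Delta^+_{\g,c}$-dominant and hence of the form $\Lambda^\g(\mu,d)$. Then I apply \propref{hw-a} or \propref{hw-bcd} to convert this into the statement that $\SLa^\g(\mu,d)$ is the dominant weight attached to $\sigma$ on the unitarizable side. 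Matching this against the $\mf{l}_{\ov\g}$-module supplied by super duality completes the identification of each homology summand, and summing over $w\in W^0_{\g,k}$ gives the claimed direct sum decomposition.

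The main obstacle I anticipate is \emph{not} the homological input, which is packaged cleanly by \cite[Theorem 4.10]{CLW} and the classical Kostant formulas, but the bookkeeping needed to ensure that super duality matches the \emph{labels} correctly — that the Levi $\mf{l}_\g$ on the integrable side is carried to $\mf{l}_{\ov\g}$ and that the parameter $d$ and the twist by $\phi_{\ov\g}$ and $\vartheta_{\ov\g}$ are tracked consistently through the functor. In particular, the delicate point is the $\g=\mf{d}$ case when $0\in\mc{S}$ (equivalently $d$ even and $\la'_1=\tfrac{d}{2}$), where the signed permutation $\sigma$ must be replaced by $\sigma^0$ with $\ov{\sigma^0}=\sigma$; here one must verify that the length-preserving bijection $W^0_{\mf{c},k}\leftrightarrow W^0_{\mf{d},k}$ of \lemref{sigma-bar} lines up the homological degree $k$ on both sides, which is exactly the content of \lemref{c-d} and the equality $\ell_\mf{c}(\sigma)=\ell_\mf{d}(\ov\sigma)$. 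Once these compatibilities are checked, the result follows formally from the combinatorial propositions already proved.
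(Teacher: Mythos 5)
Your approach is essentially the paper's: the paper gives no proof of \thmref{homology} at all, simply citing \cite[Theorem 7.2]{CK} and \cite[Theorem 6.5]{CKW} and remarking that the statement follows from Kostant's formulas for the integrable modules $L(\g_\infty,\Lambda^\g(\la,d))$ transported through the super duality homology isomorphism \cite[Theorem 4.10]{CLW} --- which is exactly your first paragraph. The one remark worth making is that your second and third paragraphs do more than this theorem requires: the super duality correspondence already matches the Levi labels $\Lambda^\g(\mu,d)\leftrightarrow\SLa^\g(\mu,d)$ directly, so \propref{hw-bcd}, \propref{hw-a} and the $\sigma\mapsto\ov{\sigma}$ bookkeeping of \lemref{sigma-bar} and \lemref{c-d} are not needed here; in the paper that machinery is reserved for the subsequent step of converting \thmref{homology} into \thmref{main}.
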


For $\xi$ belonging the subspace of $\mathfrak{h}_{\ov\g}^\ast$ spanned by $\epsilon_j$s and $\vartheta_{\ov\g}$, let $\Psi(\xi)=\{\alpha\in
\Delta_{\ov\g,n}^+\,|\,(\xi+\rho_{\ov\g}\,\vert\,\alpha)=0\}$ and define $\Phi(\xi)$ to be the
subset of $\Delta_{\ov\g,n}^+$ consisting of roots $\beta$ satisfying the following conditions \cite{E,DES}:
\begin{itemize}
\item[i.] $\langle \xi+\rho_{\ov\g},
\beta^\vee\rangle\in\N$;
\item[ii.] $(\beta\,\vert\,\alpha)=0$ for all $\alpha\in\Psi(\xi)$;
\item[iii.] $\beta$ is short if $\Psi(\xi)$ contains a long root.
\end{itemize}
Let ${W}_{\ov\g}(\xi)$ be the
subgroup of ${W}_{\ov\g}$ that is generated by the reflections $s_\beta$
with $\beta\in\Phi(\xi)$.
Define $\Delta_{\ov\g}(\xi)$ to be the subset of $\Delta_{\ov\g}$ consisting of the roots
$\vartheta\in\Delta_{\ov\g}$ such that $s_\vartheta$ lies in ${W}_{\ov\g}(\xi)$.

For $(\la,d)\in \mc{D}(\g)$, let $\Delta_{\ov\g}(\la,d)=\Delta_{\ov\g}(\ov{\La}^\g(\la,d))$
and ${W}_{\ov\g}(\la,d)={W}_{\ov\g}(\ov{\La}^\g(\la,d))$. Then $\Delta_{\ov\g}(\la,d)$ is an abstract root system
and ${W}_{\ov\g}(\la,d)$ is the Weyl group of $\Delta_{\ov\g}(\la,d)$ \cite{E, EHW} (see also \lemref{D} below). Let $\Delta_{\ov\g}^+(\la,d)=\Delta_{\ov\g}(\la,d)\cap\Delta_{\ov\g}^+$ be the set of positive roots of $\Delta_{\ov\g}(\la,d)$.
Set ${W}_{\ov\g,0}(\la,d)={W}_{\ov\g}(\la,d)\cap {W}_{\ov\g,0}$. Let ${W}_{\ov\g}^0(\la,d)$ denote the set of the
minimal length representatives of the left coset space ${W}_{\ov\g}(\la,d)/{W}_{\ov\g,0}(\la,d)$ and let ${W}_{\ov\g,k}^0(\la,d)$ be the subset of ${W}_{\ov\g}^0(\la,d)$ consisting of elements $\sigma$ with $\ell_{(\la,d)}(\sigma)=k$, where $\ell_{(\la,d)}$ is the length
function on $W_{\ov\g}(\la,d)$.

For $(\la,d)\in \mc{D}(\g)$, let ${J}^0=J\sqcup \{j\mid \ov{\zeta}_j=0\}$ for $\g=\mf{c,d}$ and define
\begin{align*}
\Upsilon(\la,d)=
\begin{cases}
\{\epsilon_i-\epsilon_j \in\Delta_{\ov\g}^+;\, ( i\in J_-, \, j\in J_+)\}, \,\, {\rm for}\,\, \g=\mf{a};\\
\{-\epsilon_i-\epsilon_j\in\Delta_{\ov\g}^+;\, ( i<j,\, i, j\in J^0)\},  \,\, {\rm for} \,\, \g=\mf{c};\\
\{-\epsilon_i-\epsilon_j\in\Delta_{\ov\g}^+;\, ( i<j,\, i, j\in J)\},  \,\, {\rm if}\,\, J^0\not=J\,\, {\rm or}\,\, \frac{d}{2}\notin\Z, {\rm for} \,\, \g=\mf{d};\\
 \{-\epsilon_i-\epsilon_j,-2\epsilon_i\in\Delta_{\ov\g}^+;\, ( i<j,\, i, j\in J)\},   \,\, {\rm if}\,\, J^0=J\,\, {\rm and}\,\, \frac{d}{2}\in\Z, {\rm for} \,\, \g=\mf{d}.
\end{cases}
\end{align*}

\begin{lem}\label{D} For $(\la,d)\in \mc{D}(\g)$, we have
\begin{align*}
\Delta_{\ov\g}(\la,d)=
\begin{cases}
\{\epsilon_i-\epsilon_j \in\Delta_{\ov\g};\, (i\not=j,\,\, i, j\in J_-\sqcup J_+)\}, \,\, {\rm for}\,\, \g=\mf{a};\\
\{\pm(\pm\epsilon_i-\epsilon_j)\in\Delta_{\ov\g};\, ( i<j,\, i, j\in J^0)\},  \,\, {\rm for} \,\, \g=\mf{c};\\
\{\pm(\pm\epsilon_i-\epsilon_j)\in\Delta_{\ov\g};\, ( i<j,\, i, j\in J)\},  \,\, {\rm if}\,\, J^0\not=J\,\, {\rm or}\,\, \frac{d}{2}\notin\Z, {\rm for} \,\, \g=\mf{d};\\
 \{\pm(\pm\epsilon_i-\epsilon_j),\pm2\epsilon_i\in\Delta_{\ov\g};\, ( i<j,\, i, j\in J)\},\,{\rm if}\,J^0=J\,{\rm and}\,\frac{d}{2}\in\Z, {\rm for} \,\, \g=\mf{d}.
\end{cases}
\end{align*}
\end{lem}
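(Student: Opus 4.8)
The plan is to unwind, for $\xi=\ov{\La}^\g(\la,d)$, the definitions of $\Psi(\xi)$, $\Phi(\xi)$, $W_{\ov\g}(\xi)$ and $\Delta_{\ov\g}(\xi)$ in terms of the integers $\{\ov\zeta_i\}$ of \lemref{zeta-bcd} and \lemref{A}, and then to recognize the reflection subgroup they generate. The first step is to pair $\xi+\rho_{\ov\g}$ against the noncompact positive roots, which for $\g=\mf{c},\mf{d}$ are $-\epsilon_i-\epsilon_j$ $(i<j)$ together with the long roots $-2\epsilon_i$ when $\ov\g=\mf{c}$, and for $\g=\mf{a}$ are $\epsilon_i-\epsilon_j$ with $i\le 0<j$. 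A direct computation of these pairings, using the normalization of $\ov\zeta_i$ in \lemref{zeta-bcd} and \lemref{A}, shows that a short root $-\epsilon_i-\epsilon_j$ lies in $\Psi(\xi)$ exactly when $\ov\zeta_i+\ov\zeta_j=0$, a long root $-2\epsilon_i$ exactly when $\ov\zeta_i=0$, and (for $\g=\mf{a}$) that $\epsilon_i-\epsilon_j\in\Psi(\xi)$ exactly when $\ov\zeta_i=\ov\zeta_j$. By \lemref{S-bcd} and \lemref{A} the indices that occur in these $\Psi$-pairs are precisely the complement of $J^0$ (resp. of $J$, resp. of $J_-\sqcup J_+$).

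Next I would compute $\Phi(\xi)$ from conditions (i)--(iii) and show $\Phi(\xi)=\Upsilon(\la,d)$. Condition (ii) demands orthogonality to every root of $\Psi(\xi)$; since $(-\epsilon_i-\epsilon_j\,|\,{-\epsilon_k}-\epsilon_l)=\delta_{ik}+\delta_{il}+\delta_{jk}+\delta_{jl}$ and likewise in the other types, this is equivalent to the support of $\beta$ being disjoint from all $\Psi$-pairs, i.e. contained in $J^0$ (resp. $J$, resp. $J_-\sqcup J_+$). Condition (i) is then automatic on these index sets: for $\g=\mf{c},\mf{d}$ the $\ov\zeta_i$ there are nonpositive by \lemref{zeta-bcd}, so $-\ov\zeta_i-\ov\zeta_j$ and $-2\ov\zeta_i$ are positive integers, while for $\g=\mf{a}$ the values attached to $J_-$ all exceed those attached to $J_+$ by \lemref{A} and the defining inequality $(\la^-)'_1+(\la^+)'_1\le d$ of $\mc{D}(\mf{a})$, so $\ov\zeta_i-\ov\zeta_j\in\N$. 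The one delicate point is the fate of the long roots $-2\epsilon_i$ when $\ov\g=\mf{c}$, governed by the trichotomy of \lemref{S-bcd}(ii): when $d$ is odd the $\ov\zeta_i$ are half-integers and condition (i) fails for every $-2\epsilon_i$; when $d$ is even with $\la'_1\neq\frac{d}{2}$ some $\ov\zeta_{i_0}=0$, so $-2\epsilon_{i_0}\in\Psi(\xi)$ is a long root and condition (iii) forbids all long roots in $\Phi(\xi)$; in both cases $J^0\ne J$ or $\frac{d}{2}\notin\Z$ and $\Phi(\xi)$ consists of short roots on $J$ only. Only when $d$ is even with $\la'_1=\frac{d}{2}$ (so $J^0=J$, no $\ov\zeta_i$ vanishes, and $\Psi(\xi)$ has no long root) do the long roots $-2\epsilon_i$ $(i\in J)$ survive conditions (i) and (iii). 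This reproduces exactly the list $\Upsilon(\la,d)$.

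Finally I would identify $\Delta_{\ov\g}(\xi)$ with the root system of $W_{\ov\g}(\xi)=\langle s_\beta\mid\beta\in\Phi(\xi)\rangle$. The key computation is that $s_{\epsilon_i+\epsilon_k}(\epsilon_i+\epsilon_j)=\epsilon_j-\epsilon_k$, so that the reflections in the sum-roots alone already generate the Weyl group of type $D$ on $J^0$ (and of type $C$ once the reflections $s_{-2\epsilon_i}$ are adjoined); for $\g=\mf{a}$ the complete bipartite family $\{s_{\epsilon_i-\epsilon_j}\mid i\in J_-,\,j\in J_+\}$ generates the symmetric group on $J_-\sqcup J_+$, since $J_\pm$ are nonempty (indeed infinite) by \lemref{A} and the bipartite graph is connected, whence every transposition is obtained. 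Because reflections in a Weyl group correspond bijectively to its positive roots, $\Delta_{\ov\g}(\xi)$ is then exactly the set of roots in $\Delta_{\ov\g}$ supported on the relevant index set and of the relevant type, which is the asserted list. I expect the main obstacle to be the bookkeeping of the second paragraph, namely matching the three arithmetic regimes of \lemref{S-bcd}(ii) to the $C$/$D$ dichotomy and checking that conditions (i) and (iii) together remove precisely the long roots in the type $D$ cases while retaining them in the type $C$ case.
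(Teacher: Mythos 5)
Your argument follows the same route as the paper's (much terser) proof: compute $\Psi$, identify $\Phi(\ov{\La}^\g(\la,d))$ with $\Upsilon(\la,d)$ using \lemref{S-bcd} and \lemref{A}, then pass to the reflection subgroup generated, "using the relations of the Weyl groups"; your write-up simply makes explicit what the paper leaves implicit. One justification is wrong as stated, although the conclusion it is meant to support survives: for $\g=\mf{d}$ the values $\ov\zeta_i$ with $i\in J$ need \emph{not} all be nonpositive. Indeed, by \lemref{zeta-bcd} one has $\zeta_1>0$ whenever $\la'_1>\frac{d}{2}$, and in the regimes where $\ov{\mc{S}}=\mc{S}$ this positive value occurs among the $\ov\zeta_i$ with $i\in J$. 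Condition (i) for the short roots still holds, but for a sharper reason: as in the proof of \lemref{S-bcd}, $\zeta_1+\zeta_j\le\la'_1+\la'_2-d-j+1\le-1$ for $j\ge2$, so the sum of any two distinct members of $\mc{S}$ is a negative integer; you should invoke this estimate rather than nonpositivity of the individual $\ov\zeta_i$. (The long roots $-2\epsilon_i$ are unaffected, since they enter $\Phi$ only when $d$ is even and $\la'_1=\frac{d}{2}$, in which case $\zeta_1=0$ is removed from $\ov{\mc{S}}$ and the remaining values are genuinely negative.) Everything else --- the identification of the support of $\Psi$ with the complement of $J^0$, $J$, or $J_-\sqcup J_+$, the elimination of the long roots via conditions (i) and (iii) in the two type-$D$ regimes matching the trichotomy of \lemref{S-bcd}, and the generation of the full type $A$/$C$/$D$ reflection subgroup from the noncompact roots alone --- checks out and is consistent with the paper's intended reading.
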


\begin{proof}For $(\la,d)\in \mc{D}(\g)$, we have  $\Phi(\ov{\La}^\g(\la,d))\subseteq\Upsilon(\la,d)$ by \lemref{S-bcd} and \lemref{A}. Using the relations of the Weyl groups, it is easy to observe that $\Upsilon(\la,d)\subseteq\Delta_{\ov\g}(\la,d)$. Now the lemma follows by using the relations of the Weyl groups again.

\end{proof}
\begin{lem}\label{W-W} For $(\la,d)\in \mc{D}(\g)$, there is a bijection from $W_{\g,k}^0$ to $W_{\ov\g,k}^0(\la,d)$.
\end{lem}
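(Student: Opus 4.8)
The plan is to show that, after relabeling index sets by order isomorphisms and inserting the type-$C$/type-$D$ dictionary of \lemref{sigma-bar} wherever the types of $\g$ and $\Delta_{\ov\g}(\la,d)$ disagree, the set $W^0_{\ov\g,k}(\la,d)$ is put in length-preserving bijection with $W^0_{\g,k}$ degree by degree. First I would read off from \lemref{D} the type and index set of $\Delta_{\ov\g}(\la,d)$: type $A$ on $J_-\sqcup J_+$ for $\g=\mf a$; type $D$ on $J^0$ for $\g=\mf c$; type $D$ on $J$ for $\g=\mf d$ with $J^0\ne J$ or $\frac{d}{2}\notin\Z$; and type $C$ on $J$ for $\g=\mf d$ with $J^0=J$ and $\frac{d}{2}\in\Z$. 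In each case the index set is cofinite in its ambient set ($\Z_{\le 0}$ and $\N$ for $\g=\mf a$, and $\N$ for $\g=\mf c,\mf d$), because the coincidences $\ov\zeta_i=\pm\ov\zeta_j$ cutting out the complements of $J$ and $J^0$ occur only finitely often (the $\ov\zeta_i$ are strictly monotone and eventually very negative, by \lemref{zeta-bcd} and \lemref{A}). Hence there is a unique order isomorphism of the index set onto the standard one, compatible with the sign convention $\epsilon_{-i}=-\epsilon_i$ in the $\mf c,\mf d$ cases.

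Next I would use that $W_{\ov\g}(\la,d)$ is the Weyl group of $\Delta_{\ov\g}(\la,d)$ and that $W_{\ov\g,0}(\la,d)=W_{\ov\g}(\la,d)\cap W_{\ov\g,0}$ is its parabolic subgroup cut out by the compact roots $\epsilon_i-\epsilon_j$ of the integral system. The order isomorphism of index sets then carries $(\Delta_{\ov\g}(\la,d),\Delta^+_{\ov\g}(\la,d))$ to the standard based root system of the same type, simple roots to simple roots, and $W_{\ov\g,0}(\la,d)$ to the corresponding type-$A$ parabolic; being an isomorphism of Coxeter systems, it restricts to a bijection of minimal coset representatives matching the length $\ell_{(\la,d)}$ with the standard length, since both are computed by the same inversion formula \eqnref{length} over order-isomorphic index sets. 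For $\g=\mf a$ this already yields a length-preserving bijection $W^0_{\mf a,k}\cong W^0_{\ov{\mf a},k}(\la,d)$ (both type $A$), and for $\g=\mf d$ in the type-$D$ subcase it yields $W^0_{\mf d,k}\cong W^0_{\ov{\mf d},k}(\la,d)$.

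It remains to treat the two cases in which the type of $\g$ differs from the type of $\Delta_{\ov\g}(\la,d)$, where I would interpose \lemref{sigma-bar}. For $\g=\mf c$ (type $C$) the integral system is type $D$, so I compose the length-preserving bijection $W^0_{\mf c,k}\to W^0_{\mf d,k}$, $\sigma\mapsto\ov\sigma$, of \lemref{sigma-bar} with the relabeling $W^0_{\mf d,k}\cong W^0_{\ov{\mf c},k}(\la,d)$ coming from the order isomorphism $\N\to J^0$. For $\g=\mf d$ with $J^0=J$ and $\frac{d}{2}\in\Z$ the integral system is type $C$, so I use the inverse bijection $W^0_{\mf d,k}\to W^0_{\mf c,k}$ of \lemref{sigma-bar} followed by the relabeling $W^0_{\mf c,k}\cong W^0_{\ov{\mf d},k}(\la,d)$ via $\N\to J$. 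In both cases each constituent is a bijection preserving the relevant length, so the composite is the desired bijection $W^0_{\g,k}\to W^0_{\ov\g,k}(\la,d)$.

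The step I expect to be the main obstacle is the $\g=\mf d$ analysis: one must split correctly into the type-$C$ and type-$D$ subcases according to whether a value $\ov\zeta_j=0$ occurs (equivalently $J^0\ne J$) and whether $\frac{d}{2}\in\Z$, and then confirm that \lemref{sigma-bar}, whose map $\sigma\mapsto\ov\sigma$ inserts exactly one index governed by the parity of the sign-change point, is compatible with the extra node indexed by $\ov\zeta=0$ (respectively with the long roots $\pm2\epsilon_i$) of the integral system. I would verify this by comparing the inversion counts in the length formulas \eqnref{length} for types $C$ and $D$ against the relabeled permutations, tracking how the zero value is absorbed via \lemref{c-d}, which is precisely the bookkeeping already carried out in the proof of \propref{hw-bcd}.
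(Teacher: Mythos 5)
Your proposal is correct and follows essentially the same route as the paper: read off the type and index set of $\Delta_{\ov\g}(\la,d)$ from \lemref{D}, identify $W^0_{\ov\g,k}(\la,d)$ with the standard $W^0_{\g,k}$ (or $W^0_{\ov\g,k}$) by relabeling, and bridge the two type-mismatched cases ($\g=\mf c$, and $\g=\mf d$ with $J^0=J$ and $\frac{d}{2}\in\Z$) with the length-preserving bijection $\sigma\mapsto\ov\sigma$ of \lemref{sigma-bar}. The paper's proof is just a terser version of this, leaving the order-isomorphism relabeling and the cofiniteness of the index sets implicit, which you spell out.
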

\begin{proof} By \lemref{D}, it is clear that $W_{\g,k}^0=W_{\ov\g,k}^0(\la,d)$ for the cases $\g=\mf{a}$
and $\g=\mf{d}$ with $J^0\not=J$ or $\frac{d}{2}\notin\Z$. For the cases $\g=\mf{c}$ and $\g=\mf{d}$ with $J^0=J$ and $\frac{d}{2}\in\Z$, the lemma follows from \lemref{D} and \lemref{sigma-bar}.
\end{proof}

Using \thmref{homology}, \propref{hw-bcd}, \propref{hw-a}, \lemref{W-W} and \lemref{c-d} together with \eqnref{action-a} and \eqnref{action-bcd}, we have the following theorem.

\begin{thm}\label{main}
For $(\la,d)\in\mc{D}(\g)$ and $k\in \Z_+$, we have, as
$\mf{l}_{\ov\g}$-modules,
\begin{equation*}
{\rm H}_k\big(\mf {u}_{\ov\g}^-;L(\ov\g_\infty,\SLa^\g(\la,d))\big)
\cong\bigoplus_{w\in {W}_{\ov\g,k}^0(\la,d)}L\big(\mf{l}_{\ov\g}\,,[w^{-1}( \SLa^\g(\la,d)+\rho_{\ov\g})]^+ -\rho_{\ov\g}\big).
\end{equation*}
\end{thm}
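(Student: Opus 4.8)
The plan is to identify the homology produced by \thmref{homology} with the right-hand side of the assertion term by term, using the bijection of \lemref{W-W} to pass from the index set $W^0_{\g,k}$ (natural on the integrable side) to the index set $W^0_{\ov\g,k}(\la,d)$ (natural on the unitarizable side). \thmref{homology} writes the homology as $\bigoplus_\mu L(\mf{l}_{\ov\g},\SLa^\g(\mu,d))$ with $\mu$ running over the (pairs of) partitions satisfying $\Lambda^\g(\mu,d)=w^{-1}\circ\Lambda^\g(\la,d)$ for some $w\in W^0_{\g,k}$. Now \lemref{zeta-bcd} and \lemref{A} show that $\Lambda^\g(\la,d)+\rho_\g$ is a regular weight for $W_\g$ (its $\epsilon$-coordinates $\{\zeta_i\}$ being distinct, the single possible zero at position $1$ being unmatchable inside $W_\mf{d}$), so $w\mapsto\mu$ is injective and the sum is genuinely indexed by $W^0_{\g,k}$. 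For $w\in W^0_{\g,k}$ let $w'\in W^0_{\ov\g,k}(\la,d)$ denote its image under \lemref{W-W}. It then suffices to establish the single weight identity
\[
\SLa^\g(\mu,d)=\big[\,w'^{-1}\big(\SLa^\g(\la,d)+\rho_{\ov\g}\big)\,\big]^+-\rho_{\ov\g}.
\]

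For the core computation I would express $\SLa^\g(\la,d)+\rho_{\ov\g}$ in the $\{\ov\zeta_i\}$-coordinates of \lemref{zeta-bcd} (resp. \lemref{A}) and apply $w'^{-1}$ through the explicit action \eqnref{action-bcd} (resp. \eqnref{action-a}). A direct check shows the $\vartheta_{\ov\g}$-coefficient is reabsorbed precisely, leaving
\[
w'^{-1}\big(\SLa^\g(\la,d)+\rho_{\ov\g}\big)=\sum_{i}\ov\zeta_{w'(i)}\epsilon_i-d\langle\vartheta_\g,K\rangle\phi_{\ov\g}-\tfrac{d\langle\vartheta_\g,K\rangle}{\langle\vartheta_{\ov\g},K\rangle}\vartheta_{\ov\g}
\]
(with the evident split between $i\le 0$ and $i>0$ when $\g=\mf{a}$). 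Since $\phi_{\ov\g}$ is fixed by every simple reflection generating $W_{\ov\g,0}$, it is orthogonal to all of $\Delta_{\ov\g,c}$, so the dominant-representative operator $[\,\cdot\,]^+$ commutes with subtracting the multiple of $\phi_{\ov\g}$. Hence applying $[\,\cdot\,]^+$ reduces exactly to the bracketed expressions computed in \propref{hw-bcd} and \propref{hw-a}; feeding those in and cancelling the common $\phi_{\ov\g}$-term gives $[w'^{-1}(\SLa^\g(\la,d)+\rho_{\ov\g})]^+=\SLa^\g(\mu,d)+\rho_{\ov\g}$, which is the identity above.

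The content of the previous paragraph rests on knowing that $w'$ permutes the coordinates indexed by $J$ (or $J^0$) exactly according to the permutation $\sigma=w$ of \propref{hw-bcd}/\propref{hw-a}; that is, $w'(j_i)=j_{\sigma(i)}$ when $0\notin\mc{S}$, and $w'(j_i)=j_{\sigma^0(i)}$ when $0\in\mc{S}$. For $\g=\mf{a}$, and for $\g=\mf{d}$ with $J^0\ne J$ or $\tfrac d2\notin\Z$, \lemref{D} gives $W_{\ov\g}(\la,d)$ the same type as $W_\g$, so $w'=w$ on the relevant indices and this is automatic. The genuine difficulty, which I expect to be the main obstacle, is the complementary cases $\g=\mf{c}$ and $\g=\mf{d}$ with $J^0=J$, $\tfrac d2\in\Z$: here the relative root system switches between types $C$ and $D$, so $w'$ is obtained from $w$ through the transfer $\sigma\mapsto\ov\sigma$ of \lemref{sigma-bar}, and it is precisely \lemref{c-d} that equates the two coordinate multisets (the one carrying an extra $0$ with the shifted one), thereby matching $w'(j_i)$ with $j_{\sigma^0(i)}$ and correctly locating the index where $\ov\zeta$ vanishes. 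Finally, the length identity $\ell_\mf{c}(\sigma)=\ell_\mf{d}(\ov\sigma)$ built into \lemref{sigma-bar} guarantees that this identification preserves the homological degree $k$, so that \lemref{W-W} reindexes the direct sum degree by degree and the proof closes.
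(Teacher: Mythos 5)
Your proposal is correct and follows essentially the same route as the paper, which derives \thmref{main} by combining \thmref{homology}, \propref{hw-bcd}, \propref{hw-a}, \lemref{W-W} and \lemref{c-d} with the explicit Weyl-group actions \eqnref{action-a} and \eqnref{action-bcd}; you have simply made explicit the reindexing, the absorption of the $\phi_{\ov\g}$-shift (which indeed commutes with $[\,\cdot\,]^+$ since $\phi_{\ov\g}$ is $W_{\ov\g,0}$-invariant), and the type $C$/$D$ transfer via $\sigma\mapsto\ov\sigma$ that the paper leaves implicit.
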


\begin{rem}\label{cohomology} There has the counterpart of the Theorem 4.11 in \cite{CLW} for $\mf {u}^+$-cohomology in the sense of \cite[Section 4]{L}. The analogous statement is also true for $\g=\mf{a}$.  The formulas for $\mf {u}^+$-cohomology can be proved by the same argument as in the proof in \cite{CLW}.
Therefore, there is an analogue of \thmref{main} for $\mf {u}_{\ov\g}^+$-cohomology in the sense of \cite{L}. The formulas for the cohomology can be proved by the same argument as in the proof of the theorem above.
\end{rem}

\section{Homology formulas for unitarizable modules over finite dimensional Lie algebras}\label{application}

In the section we shall give a new proof of Enright's homology formulas for unitarizable modules over classical Lie algebras corresponding to the three Hermitian symmetric pairs of classical types $(SU(p,q),SU(p)\times SU(q))$, $(Sp(n,\mathbb{R}),U(n))$ and $(SO^\ast(2n),U(n))$.

For $\xi$ belonging to $\h_{\mf{t}\ov\g}^*$, let $\Psi(\xi)=\{\alpha\in
\Delta_{{\mf{t}\ov\g},n}^+\,|\,(\xi+\rho_{{\mf{t}\ov\g}}\,\vert\,\alpha)=0\}$ and define $\Phi(\xi)$ to be the
subset of $\Delta_{{\mf{t}\ov\g},n}^+$ consisting of roots $\beta$ satisfying the following conditions \cite{E,DES}:
\begin{itemize}
\item[i.] $\langle \xi+\rho_{{\mf{t}\ov\g}},\beta^\vee\rangle\in\N$;
\item[ii.] $(\beta\,\vert\,\alpha)=0$ for all $\alpha\in\Psi(\xi)$;
\item[iii.] $\beta$ is short if $\Psi(\xi)$ contains a long root.
\end{itemize}
Let ${W}_{{\mf{t}\ov\g}}(\xi)$ be the
subgroup of ${W}_{{\mf{t}\ov\g}}$ that is generated by the reflections $s_\beta$
with $\beta\in\Phi(\xi)$.
Associated to ${W}_{{\mf{t}\ov\g}}(\xi)$, let $\Delta_{{\mf{t}\ov\g}}(\xi)$ denote the subset of $\Delta_{{\mf{t}\ov\g}}$ consisting of the roots
$\vartheta$ such that $s_\vartheta$ lies in ${W}_{{\mf{t}\ov\g}}(\xi)$. We also let
$[\xi]^+$ be the unique $\Delta^+_{{\mf{t}\ov\g},c}$-dominant element in
the ${W}_{{\mf{t}\ov\g},0}$-orbit of $\xi$.

Assume that the irreducible module $L({\mf{t}\ov{\g}},\xi)$ is unitarizable with highest weight $\xi\in \h_{\mf{t}\ov{\g}}^*$. Then $\Delta_{\mf{t}\ov\g}(\xi)$ is an abstract root system
and ${W}_{\mf{t}\ov\g}(\xi)$ is the Weyl group of $\Delta_{\mf{t}\ov\g}(\xi)$ by \cite{E,EHW}.
Let $\Delta_{\mf{t}\ov\g}^+(\xi)=\Delta_{\mf{t}\ov\g}(\xi)\cap\Delta_{\mf{t}\ov\g}^+$ be the set of positive roots of $\Delta_{\mf{t}\ov\g}(\xi)$. Set ${W}_{\mf{t}\ov\g,0}(\xi)={W}_{\mf{t}\ov\g}(\xi)\cap {W}_{\mf{t}\ov\g,0}$. Let ${W}_{\mf{t}\ov\g}^0(\xi)$ denotes the set of the
minimal length representatives of the left coset space ${W}_{\mf{t}\ov\g}(\xi)/{W}_{\mf{t}\ov\g,0}(\xi)$ and let ${W}_{\mf{t}\ov\g,k}^0(\xi)$ be the subset of ${W}_{\mf{t}\ov\g}^0(\xi)$ consisting of elements $\sigma$ with $\ell_{\xi}(\sigma)=k$, where $\ell_{\xi}$ is the length
function on $W_{\mf{t}\ov\g}(\xi)$.

\begin{thm}\label{Enright} For $\ov{\g}=\mf{a, c}$ or $\mf{d}$, let $L({\mf{t}\ov{\g}},\xi)$ be a unitarizable ${\mf{t}\ov{\g}}$-module with highest weight $\xi\in \h_{\mf{t}\ov{\g}}^*$.
Assume that $\xi$ satisfies the assumption of the Case $(iii)$ of \thmref{EHW} (cf. Case $(ii)$ of \cite[Theorem 9.4]{EHW}) for $\mf{t}\ov\g\cong\mf{so}(2n)$.
For $k\in \Z_+$, we have, as
$\mf{l}_{\mf{t}\ov{\g}}$-modules,
\begin{equation*}
H_k\big(\mf {u}_{\mf{t}\ov{\g}}^-;L({\mf{t}\ov{\g}},\xi)\big)
\cong\bigoplus_{w\in {W}_{{\mf{t}\ov{\g}},k}^0(\xi)}L\big(\mf{l}_{\mf{t}\ov{\g}}\,,[w^{-1}( \xi+\rho_{\mf{t}\ov{\g}})]^+ -\rho_{\mf{t}\ov{\g}}\big).
\end{equation*}
\end{thm}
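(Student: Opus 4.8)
The plan is to obtain \thmref{Enright} by applying the truncation functor $\mf{tr}_{\mf{t}\ov\h}$ of \secref{hw} to \thmref{main}. The functor $\mf{tr}_{\mf{t}\ov\h}$ is exact, being the projection onto a set of weight spaces, and by the construction of \cite[Section 3.4]{CLW} it carries the Chevalley--Eilenberg complex $\Lambda^\bullet\mf u_{\ov\g}^-\otimes V$ computing ${\rm H}_\bullet(\mf u_{\ov\g}^-;V)$ to the complex $\Lambda^\bullet\mf u_{\mf{t}\ov\g}^-\otimes\mf{tr}_{\mf{t}\ov\h}V$ computing ${\rm H}_\bullet(\mf u_{\mf{t}\ov\g}^-;\mf{tr}_{\mf{t}\ov\h}V)$; hence $\mf{tr}_{\mf{t}\ov\h}\,{\rm H}_k(\mf u_{\ov\g}^-;V)\cong{\rm H}_k(\mf u_{\mf{t}\ov\g}^-;\mf{tr}_{\mf{t}\ov\h}V)$. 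Taking $V=L(\ov\g_\infty,\SLa^\g(\la,d))$ and invoking \eqnref{trf}, the left-hand side of \thmref{main} is sent to ${\rm H}_k(\mf u_{\mf{t}\ov\g}^-;L(\mf{t}\ov\g,\Gamma_{\mf{t}\ov\g}(\la,d)))$, which is the left-hand side of \thmref{Enright} once $\xi$ is identified with $\Gamma_{\mf{t}\ov\g}(\la,d)$.

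I would first reduce to $\xi=\Gamma_{\mf{t}\ov\g}(\la,d)$ with $(\la,d)\in\mc{D}_{\mf t}(\g)$, i.e. $d\in\Z_+$. By \thmref{EHW} (subject to the Case~(iii) hypothesis when $\mf{t}\ov\g\cong\mf{so}(2n)$) a unitarizable $\xi$ gives $N(\mf{t}\ov\g,\xi)$ either irreducible or reducible. If irreducible, then $L(\mf{t}\ov\g,\xi)=N(\mf{t}\ov\g,\xi)$ is free over $U(\mf u_{\mf{t}\ov\g}^-)$, so the homology is $L(\mf l_{\mf{t}\ov\g},\xi)$ in degree $0$ and vanishes above; correspondingly the pairings of condition~(i) are non-integral, $\Phi(\xi)=\emptyset$, $W_{\mf{t}\ov\g}(\xi)=\{1\}$, and the right-hand side reduces to the same degree-$0$ term. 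If reducible, the integrality in condition~(i) forces $d\in\Z$, hence $(\la,d)\in\mc{D}_{\mf t}(\g)$, the situation reached by truncation. In the $\mf{gl}$ case the extra parameter $k\sum_i\epsilon_i$ of \thmref{EHW} is harmless: $\sum_i\epsilon_i$ is fixed by $W_{\mf{t}\ov{\mf a}}$ and pairs to zero with every noncompact coroot, so it alters neither $\Phi(\xi)$ nor the operators $w^{-1}(-)$ and $[-]^+$, and merely translates both sides uniformly.

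For the discrete part I would feed in the description of the summands furnished by \thmref{homology}: each summand is $L(\mf l_{\ov\g},\ov{\Lambda}^\g(\mu,d))$ with $\mu$ a partition (or pair of partitions) satisfying $\Lambda^\g(\mu,d)=w^{-1}\circ\Lambda^\g(\la,d)$ for some $w\in W^0_{\g,k}$. The Levi analogue of \eqnref{trf}, proved by the argument of \cite{CLW}, shows that $\mf{tr}_{\mf{t}\ov\h}L(\mf l_{\ov\g},\ov{\Lambda}^\g(\mu,d))$ equals $L(\mf l_{\mf{t}\ov\g},-)$ when $\mu$ is supported in the truncation range and is $0$ otherwise, while \propref{hw-bcd} and \propref{hw-a} identify the surviving highest weight with $[w^{-1}(\xi+\rho_{\mf{t}\ov\g})]^+-\rho_{\mf{t}\ov\g}$, using that $\rho_{\mf{t}\ov\g}$ and $\rho_{\ov\g}$ agree on $\h_{\mf{t}\ov\g}$ up to the scalar $\hf(n-m+1)$ in the $\mf{gl}$ case. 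The bijection between the surviving index set and $W^0_{\mf{t}\ov\g,k}(\xi)$ I would extract from \lemref{D}, which exhibits $\Delta_{\mf{t}\ov\g}(\xi)$ as the finite-rank part of $\Delta_{\ov\g}(\la,d)$, together with \lemref{W-W}, and with \lemref{sigma-bar} and \lemref{c-d} supplying the signed-versus-even-signed dictionary responsible for $\sigma^0$ and for the restriction to Case~(iii) in the orthogonal case.

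The hard part will be this last matching. Because the index set $J$, and hence $\Delta_{\ov\g}(\la,d)$ and $W_{\ov\g}(\la,d)$, are genuinely infinite while $\Delta_{\mf{t}\ov\g}(\xi)$ is finite, the statement is not an equality of Weyl groups but a genuine truncation: one must prove that the support condition ``$\mu$ fits in finite rank'' cuts $W^0_{\ov\g,k}(\la,d)$ down to a subset in length-preserving bijection with $W^0_{\mf{t}\ov\g,k}(\xi)$, and that applying $[-]^+$ relative to the smaller $\rho_{\mf{t}\ov\g}$ introduces no coincidences among the surviving weights. The parity bookkeeping of \lemref{sigma-bar} and \lemref{c-d} in the orthogonal ($\mf{so}(2n)$) case is the most delicate point, and it is precisely this that forces the Case~(iii) hypothesis and limits the method to a partial recovery of Enright's formula there.
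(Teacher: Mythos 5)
Your proposal follows essentially the same route as the paper: reduce to $\xi=\Gamma_{\mf{t}\ov{\g}}(\la,d)$ (absorbing the $k\sum_i\epsilon_i$ shift for $\mf{gl}$ by tensoring with a character), dispose of non-integral $d$ via irreducibility of the parabolic Verma module so that the homology is concentrated in degree $0$, and for $(\la,d)\in\mc{D}_{\mf t}(\g)$ use that $\mf{u}^-$-homology commutes with the truncation functor together with \thmref{main}, \eqnref{trf} and its Levi analogue. The final index-set matching you flag as ``the hard part'' is exactly the point the paper settles by the direct computation $\Delta_{\mf{t}\ov\g}(\xi)=\Delta_{\ov\g}(\la,d)\cap\Delta_{\mf{t}\ov\g}$ before invoking \thmref{main}, so your outline is consistent with, and no less complete than, the paper's own argument.
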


\begin{proof}  Since  $H_k(\mf {u}_{\mf{t}\ov{\mf{a}}}^-;L({\mf{t}\ov{\mf{a}}},\mu+k\sum_{i=-m+1}^{n}\epsilon_i))=H_k(\mf {u}_{\mf{t}\ov{\mf{a}}}^-;L({\mf{t}\ov{\mf{a}}},\mu))\otimes L(\mf{l}_{\mf{t}\ov{\mf{a}}}, k\sum_{i=-m+1}^{n}\epsilon_i)$ for all $i\ge 0$ and $\mu\in \mf{h}_{\mf{t}\ov{\mf{a}}}^*$, it is sufficient to show all $\xi$ with $k=0$ appearing in the Case $(i)$ of \thmref{EHW} when $\ov{\g}=\ov{\mf{a}}$.

 First we assume that $\xi=\Gamma_{\mf{t}\ov{\g}}(\la,d)$ with $d\notin \Z$. Then we have  $\Delta_{\mf{t}\ov{\g}}(\xi)=\emptyset$ and $L({\mf{t}\ov{\g}},\xi)=N({\mf{t}\ov{\g}},\xi)$ by \thmref{EHW}. Therefore $L({\mf{t}\ov{\g}},\xi)$ is a free $\mf {u}_{\mf{t}\ov{\g}}^-$-modules and hence $H_k\big(\mf {u}_{\mf{t}\ov{\g}}^-;L({\mf{t}\ov{\g}},\xi)\big)=L(\mf{l}_{\mf{t}\ov{\g}}\,,\xi)$ (resp. $=0$) for $k=0$ (resp. $k>0$). Thus the theorem holds for this case.

Now we assume that $\xi=\Gamma_{\mf{t}\ov{\g}}(\la,d)$ for some $(\la,d)\in \mc{D}_\mf{t}(\g)$. By a direct calculation, we have $\Delta_{{\mf{t}\ov\g}}(\xi)=\Delta_{\ov\g}(\la,d)\cap\Delta_{\mf{t}\ov\g}$. Recall that $\mf{tr}_{\mf{t}\ov\h}(L(\ov\g_\infty,\SLa^\g(\la,d))) =L({\mf{t}\ov{\g}},\Gamma_{\mf{t}\ov{\g}}(\la,d))$ for $(\la,d)\in\mc{D}_\mf{t}(\g)$.
Since $\mf{tr}_{\mf{t}\ov\h}(L(\ov\g_\infty,\SLa^\g(\la,d))) =L({\mf{t}\ov{\g}},\Gamma_{\mf{t}\ov{\g}}(\la,d))$ and the homology commutes with the truncation functor, we have
$$H_k(\mf {u}_{\mf{t}\ov{\g}}^-;L({\mf{t}\ov{\g}},\xi))=\mf{tr}_{\mf{t}\ov\h}(H_k(\mf {u}_{\ov\g}^-;L(\ov\g_\infty,\SLa^\g(\la,d)))).$$
Note that $H_k(\mf {u}_{\ov\g}^-;L(\ov\g_\infty,\SLa^\g(\la,d)))$ with $k\ge 0$ decompose into the direct sum of irreducible $\mf{l}_{\ov\g}$-modules of the form $L(\mathfrak{l}_{\ov{\g}},\SLa^\g(\mu,d))$ for some partition $\mu$ (resp. pair of partitions $\mu=(\mu^-,\mu^+)$) if $\g=\mf{c,d}$ (resp. $\mf{a}$) and $\mf{tr}_{\mf{t}\ov\h}(L(\mathfrak{l}_{\ov{\g}},\SLa^\g(\mu,d)))
=L(\mathfrak{l}_{\mf{t}\ov{\g}},\Gamma_{\mf{t}\ov{\g}}(\mu,d))$. Therefore, the theorem also holds for this case by \thmref{main}.
\end{proof}

\begin{rem} By \remref{cohomology}, Enright's cohomology formulas for unitarizable modules over classical Lie algebras with highest weights satisfying the assumption in the theorem above can be proved in the same manner as in above.
\end{rem}

\bigskip
\frenchspacing

\end{document}